\newcommand{\classNP}{\mathcal{NP}}
\newcommand{\classP}{\mathcal{P}}
\newcommand{\abs}[1]{\left\lvert#1\right\rvert}
\newcommand{\sbmultcov}{\ensuremath{\textsc{set } \textsc{multicover}}\xspace}
\newcommand{\bb}{\mathbf{b}}
 \newtheorem{theorem}{Theorem}
 \newtheorem{lemma}{Lemma}
 \newtheorem{definition}{Definition}
 \newdefinition{remark}{Remark}
 \newproof{proof}{Proof}
\begin{document}

\begin{frontmatter}

\title{ Repeated randomized algorithm
for the Multicovering Problem }

\author[mymainaddress]{Abbass Gorgi}
\ead{abbass.gorgi@gmail.com}
\corref{mycorrespondingauthor1}
\cortext[mycorrespondingauthor1]{Corresponding author}

\author[mysecondaryaddress]{Mourad El Ouali}
\ead{Elouali@math.uni-kiel.de}

\author[mysecondaryaddress]{Anand Srivastav}
\ead{srivastavi@math.uni-kiel.de}

\author[mymainaddress]{Mohamed Hachimi}
\ead{m.hachimi@uiz.ac.ma}

\address[mymainaddress]{Engineering Science Laboratory, University Ibn Zohr, Agadir, Morocco}
\address[mysecondaryaddress]{ Department of Computer Science, Christian Albrechts University, Kiel, Germany}

\begin{abstract}
Let $\mathcal{H}=(V,\mathcal{E})$ be a hypergraph
with maximum edge size $\ell$ and maximum degree $\Delta$. 
For given numbers $b_v\in \mathbb{N}_{\geq 2}$, $v\in V$,
a set multicover in $\mathcal{H}$ is a
set of edges $C \subseteq \mathcal{E}$ such that every vertex $v$ in $V$
belongs to at least $b_v$ edges in $C$.
\sbmultcov is the problem of finding a minimum-cardinality set multicover.
Peleg, Schechtman and Wool conjectured that unless $\classP =\classNP$, for any fixed $\Delta$
and $b:=\min_{v\in V}b_{v}$, no polynomial-time approximation algorithm
for the \sbmultcov problem has an approximation ratio less than
$\delta:=\Delta-b+1$. Hence, it's a challenge to know whether
the problem of \sbmultcov is not
approximable within a ratio of $\beta \delta$ with a constant $\beta<1$.

 This paper proposes a repeated randomized algorithm for the \sbmultcov problem combined with an initial deterministic threshold step. Boosting success by repeated trials, our algorithm yields an approximation ratio of\\
$ \max\left\{ \frac{15}{16}\delta, \left(1- \frac{(b-1)\exp\left(\frac{ 3\delta+1}{8}\right)}{72 \ell}  \right)\delta\right\}$. The crucial fact is not only that our result improves over the approximation ratio presented by Srivastav et al (Algorithmica 2016) for any $\delta\geq 13$, but it's more general since we set no restriction on the parameter $\ell$.\\
Furthermore, we prove that it is NP-hard to approximate the \sbmultcov problem on $\Delta$-regular hypergraphs within a factor of  $(\delta-1-\epsilon)$.
\\
Moreover we show that the integrality gap for the \sbmultcov problem is at least $\frac{\ln_{2}(n+1)}{2b}$, which for constant $b$ is $\Omega(\ln n )$.

\end{abstract}

\begin{keyword}
Integer linear programs, hypergraphs,
approximation algorithms, randomized rounding, set cover and set multicover.
\end{keyword}

\end{frontmatter}


\section{Introduction}
This work was intended as an attempt to solve approximately the \sbmultcov problem. A nice formulation of this problem may be given by the notion of hypergraphs.

A hypergraph is a pair 
$\mathcal{H}=(V,\mathcal{E})$, where $V$ is a finite set and 
$\mathcal{E}\subseteq 2^V$ is a family of some subsets of $V$. We call the elements of $V$
vertices and the elements of $\mathcal{E}$ (hyper-)edges. Further, let 
$n := |V|$, $m := |{\cal E}|$.
W.l.o.g.\ let the vertices be enumerated as $v_1,v_2,\dots,v_n$ and the edges as $E_1,E_2,\dots,E_m$. As usually the degree of a vertex $v$ (notation $d(v)$) is the number of hyperedges it appears in. Let $\Delta:=\max_{v\in V}d(v)$ be the maximum degree.
 Furthermore, if the degree of every vertex is exactly $\Delta$, then ${\cal H}$ is called $\Delta$-regular. We define the number of vertices of a hyperedge as its size. If the size of all hyperedges is exactly $\ell$, i.e., $\forall E\in \mathcal{E},\, |E|=\ell$, then $\mathcal{H}$ is  $\ell$-uniform.
\noindent Let $\bb:=(b_1,b_2,\dots,b_n) \in \mathbb{N}_{\geq 2}^{n}$ be given.
If a vertex $v_i$, $i\in[n]$, is contained in at least $b_i$ edges
of some subset $C \subseteq \mathcal{E}$,
we say that the vertex $v_i$ is fully covered by $b_i$ edges in $C$.
A set multicover in $\mathcal{H}$ is a set of edges $C \subseteq \mathcal{E}$ 
such that every vertex $v_i$ in $V$ is fully covered by $b_i$ edges in $C$.
The \sbmultcov problem is the task of finding a set multicover of minimum cardinality.

\noindent {\bf Related Work.} 
The set cover problem $(b=1)$ is known to be NP-hard~\cite{Karp} and has been intensively explored for decades. Several deterministic approximation algorithms are exhibited for
this problem~\cite{Ba01,GaKhSr01,Hoch82,Koufogiannakis},
all with approximation ratios $\Delta$.  Furthermore,  Johnson~\cite{Johnson74}
and Lov\'asz~\cite{Lovasz94} gave a greedy algorithm
with performance ratio $H(\ell)$,
where $ H(\ell) =\sum_{i=1}^{\ell}\frac{1}{i}$ is the harmonic number.
Notice that $H(\ell)\leq 1+\ln(\ell)$.
For hypergraphs with bounded $\ell$, Duh and Fürer~\cite{Duh97}
used the technique called semi-local optimization, 
improving $H(\ell)$ to $H(\ell)-\frac{1}{2}$.\\
Unlike the set cover problem, the case $b\geq 2$ of the \sbmultcov problem is less known. Let us give a summary of the known approximability results.
In paper~\cite{RajVaz}, Vazirani using primal-dual schema extended the result of Lov\'asz~\cite{Lovasz94} for $b\geq 1$. Later Fujito et al.~\cite{Fujito} improved the algorithm of Vazirani and achieved an approximation ratio of $H(\ell) -\frac{1}{6}$ for $\ell$ bounded.
Hall and Hochbaum~\cite{HH86} achieved by a greedy algorithm based on LP duality an approximation ratio of $\Delta$.
By a deterministic threshold algorithm
Peleg, Schechtman, and Wool in 1997~\cite{PSW97,PSW93} improved this result and gave an approximation ratio of $\delta$.
They were also the first to propose an approximation algorithm for the \sbmultcov problem with approximation ratio below $\delta$, namely a randomized rounding algorithm with performance ratio $(1-(\frac{c}{n})^\frac{1}{\delta})\cdot\delta$ for a small constant $c>0$.
However, their ratio is depending on $n$, 
and asymptotically tends to $\delta$. Furthermore Peleg, Schechtman and Wool conjectured that for any fixed $\Delta$ and $b:=\min_{i\in [n]}b_i$ the problem cannot be approximated by a ratio smaller than $\delta:= \Delta-b+1$ unless $\mathcal{P}=\mathcal{NP}$. Hence it remained an open problem whether an approximation ratio of $\beta\delta$ with $\beta<1$ constant can be proved.
A randomized algorithm of hybrid type was later given
by Srivastav et al~\cite{EMS16}.
Their algorithm achieves for hypergraphs with
$l \in \mathcal{O}\left(\max\{(nb)^\frac{1}{5},n^\frac{1}{4}\}\right)$ an approximation ratio of
$ \left(1-\frac{11 (\Delta - b)}{72l}\right)\cdot \delta $
with constant probability.\\
Concerning the algorithmic complexity, the \sbmultcov problem has still not been investigated. In contrast to the set cover problem, it is known that the problem is hard to approximate to within $\Delta-1-\epsilon$, unless $\classP =\classNP$~\cite{DinKhotRegev}, and to within $\Delta-\epsilon$ under the UGC~\cite{KR08} for any fixed $\epsilon > 0 $. Unless $\classP =\classNP$ there is no $(1-\epsilon) \ln n$ approximation~\cite{Feige}. This motivated us to study this aspect of the problem.\\
\noindent {\bf Our Results.}  
The main contribution of our paper is the combination
of a deterministic threshold-based algorithm
with repeated randomized rounding steps.
The idea is to algorithmically discard instances
that can be handled deterministically
in favor of instances
for which we obtain a constant-factor approximation less than $\delta$
using a repeated randomized strategy.
 
 Our hybrid randomized algorithm is designed as a cascade of a deterministic and a repeated randomized rounding step followed by greedy repair if the randomized solution is not feasible. First, the relaxed problem of the \sbmultcov problem is solved. The successive actions depend on the cardinality of a set of hyperedges that will be defined according to the relaxed problem output. 
Our algorithm is an extension of an example
given in~\cite{EFS14,EFS14a,EMS16,GaKhSr01,HH86,PSW93}
for the vertex cover, partial vertex cover and \sbmultcov problem
in graphs and hypergraphs.
 
The methods used in this paper rely on an application
of an extension of the Chernoff-Hoeffding bound theorem
for sums of independent random variables
and are based on estimating the variance of the summed random variables
for invoking the Chebychev-Cantelli inequality.
Our algorithm yields a performance ratio of
$ \max\left\{ \frac{15}{16}\delta, \left(1- \frac{(b-1)\exp\left(\frac{ 3\delta+1}{8}\right)}{72 \ell}  \right)\delta\right\}$. 
This ratio means a constant factor of less than $\delta$ for many settings of the parameters $\delta$, $b$, and $\ell$.
It is asymptotically better than the former approximation ratios due to Peleg et al.\ and Srivastav et al.
Furthermore, using a reduction of the set cover problem on $\Delta$-regular hypergraphs to the \sbmultcov problem on $\Delta+b-1$-regular hypergraphs, we show that it is NP-hard to approximate the \sbmultcov problem on $\Delta$-regular hypergraphs within a factor of $(\delta-1-\epsilon)$. Moreover, we show that the integrality gap for the natural LP formulation of the \sbmultcov problem is at least
$\frac{\ln_{2}(n+1)}{2b}$, which for constant $b$ is $\Omega(\ln n )$.
\smallskip 

\renewcommand{\baselinestretch}{1.5}
\begin{table}[h]
\begin{center}
Fundamental results and approximations for \sbmultcov problem \\\vskip 0.2cm
  \begin{tabular}{|c |c | c |  }
\hline
Hypergraph  & Approximation ratio  \\
\hline
- &
 $ H(\ell)$\cite{RajVaz}
 \\\hline
bounded  $\ell$  &
 $H(\ell) -\frac{1}{6}$~\cite{Fujito}
 \\\hline
- &
  $\delta$~\cite{HH86,PSW93}
   \\\hline
- &
$(1-(\frac{c}{n})^\frac{1}{\delta})\cdot \delta$   where $ c>0$ is a constant.~\cite{PSW97}
 \\ \hline
$l \in \mathcal{O}\left(\max\{(nb)^\frac{1}{5},n^\frac{1}{4}\}\right)$
 & $\left(1 - \frac{11(\Delta - b)}{72\ell} \right)\cdot \delta $~\cite{EMS16} 
  \\\hline
 - & $ \max\left\{ \frac{15}{16}\delta, \left(1- \frac{(b-1)\exp\left(\frac{ 3\delta+1}{8}\right)}{72 \ell}  \right)\delta\right\}$ 
  $\ $ (this paper) 
\\ \hline

\end{tabular}
\end{center}
\end{table}
\renewcommand{\baselinestretch}{1.0}

\noindent {\bf Outline of the paper.} In Section 2, we give all the definitions and the tools needed for our analysis. In Section 3, we present a randomized algorithm of hybrid type and its analysis. In Section 4, we give a lower bound for the problem.  In Section 5, we discuss the integrality gap of the LP formulation of the problem.
\section{ Definitions and preliminaries}
For the later analysis we will use the following extension of Chernoff-Hoeffding Bound inequality for a sum of independent random variables. It is often used if one only has a bound on the expectation: 
\begin{theorem}[see 
\cite{Doerr}]\label{Doerr}
Let $X_1,\ldots, X_n$ be independent $\{0,1\}$-random variables.
Let $X= \sum_{i=1}^n X_i$ and suppose $\mathbb{E}(X)< \mu $. For every $ 0<\beta\leq 1$ we have
\begin{equation*} \Pr[X\geq (1+\beta) \mu ]\leq
\exp{\left(-\frac{\beta^2\mu }{3 } \right)}.
\end{equation*}
\end{theorem}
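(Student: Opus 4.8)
The plan is to run the classical exponential-moment (Chernoff--Bernstein) argument, with the single extra observation needed to handle an \emph{upper} bound $\mathbb{E}(X)<\mu$ rather than an exact mean. First I fix $t>0$ and apply Markov's inequality to $e^{tX}\geq 0$:
\[
\Pr[X\geq (1+\beta)\mu]=\Pr\!\left[e^{tX}\geq e^{t(1+\beta)\mu}\right]\leq e^{-t(1+\beta)\mu}\,\mathbb{E}\!\left(e^{tX}\right).
\]
By independence $\mathbb{E}(e^{tX})=\prod_{i=1}^n\mathbb{E}(e^{tX_i})$, and with $p_i=\Pr[X_i=1]$ the bound $1+x\leq e^x$ gives $\mathbb{E}(e^{tX_i})=1+p_i(e^t-1)\leq \exp(p_i(e^t-1))$, so that $\mathbb{E}(e^{tX})\leq \exp\!\big((e^t-1)\,\mathbb{E}(X)\big)$.

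The only place the weakened hypothesis enters: for $t>0$ we have $e^t-1>0$, so the exponent is increasing in $\mathbb{E}(X)$, and $\mathbb{E}(X)<\mu$ lets us write $\mathbb{E}(e^{tX})\leq \exp((e^t-1)\mu)$. Hence $\Pr[X\geq (1+\beta)\mu]\leq \exp\!\big(\mu(e^t-1-t(1+\beta))\big)$, and optimizing the free parameter by minimizing $h(t)=e^t-1-t(1+\beta)$ at $t=\ln(1+\beta)>0$ (allowed since $\beta>0$) yields $\Pr[X\geq(1+\beta)\mu]\leq \exp\!\big(\mu(\beta-(1+\beta)\ln(1+\beta))\big)$.

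What remains is the elementary inequality $(1+\beta)\ln(1+\beta)-\beta\geq \beta^2/3$ for $0<\beta\leq 1$, which turns the sharp exponent into the stated $-\beta^2\mu/3$; this is the only computational step, and it is routine. I would set $g(\beta)=(1+\beta)\ln(1+\beta)-\beta-\beta^2/3$, observe $g(0)=0$ and $g'(\beta)=\ln(1+\beta)-\tfrac{2}{3}\beta$ with $g'(0)=0$, and note that $g''(\beta)=\tfrac{1}{1+\beta}-\tfrac{2}{3}$ is positive on $[0,\tfrac12]$ and negative on $[\tfrac12,1]$, so $g'$ first increases from $0$ then decreases but stays above $g'(1)=\ln 2-\tfrac{2}{3}>0$; thus $g'\geq 0$ on $[0,1]$, $g$ is nondecreasing, and $g\geq 0$. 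There is no substantive obstacle here — the statement is a textbook Chernoff bound quoted from \cite{Doerr} — and the restriction $\beta\leq 1$ is used only in this last inequality (a larger $\beta$ would force the weaker exponent $\beta^2/(2+\beta)$).
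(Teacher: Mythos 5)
Your proof is correct. Note that the paper does not prove this statement at all: it is quoted as Theorem~\ref{Doerr} directly from~\cite{Doerr} and used as a black box, so there is no in-paper argument to compare against. Your argument is the standard exponential-moment (Chernoff) proof, and you correctly isolate the one point where the weakened hypothesis $\mathbb{E}(X)<\mu$ enters, namely the monotonicity of $\exp\left((e^t-1)\,\mathbb{E}(X)\right)$ in $\mathbb{E}(X)$ for $t>0$, which lets you replace $\mathbb{E}(X)$ by $\mu$ before optimizing $t=\ln(1+\beta)$. The closing elementary inequality $(1+\beta)\ln(1+\beta)-\beta\geq \beta^2/3$ on $(0,1]$ is verified correctly: with $g(\beta)=(1+\beta)\ln(1+\beta)-\beta-\beta^2/3$ one has $g(0)=g'(0)=0$, $g''$ changes sign only at $\beta=\tfrac12$, and $g'(1)=\ln 2-\tfrac23>0$, so $g'\geq 0$ and $g\geq 0$ on $[0,1]$; equivalently one could pass through the sharper exponent $\beta^2/(2+\beta)$ and use $2+\beta\leq 3$, which is exactly where the restriction $\beta\leq 1$ is needed, as you observe.
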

\vskip 0.2cm\noindent
A further useful concentration theorem we will use is the
Chebychev-Cantelli inequality:
\begin{theorem}[see~\cite{MR}, page 64]\label{Che-Can}
Let $X$ be a non-negative random variable with finite mean $\mathbb{E}(X)$
and variance {\rm Var}$(X)$. Then for any $a>0$ it holds that
\begin{eqnarray*}
 \Pr(X\leq \mathbb{E}(X)-a)&\leq & \frac {{\rm Var} (X)}{{\rm Var}(X)+a^2}\cdot
\end{eqnarray*}
\end{theorem}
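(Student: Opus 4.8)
The plan is to obtain this one-sided tail bound from Markov's inequality applied to a suitably shifted second moment — this is the classical Cantelli (one-sided Chebyshev) inequality, and the non-negativity hypothesis on $X$ will in fact play no role. First I would center the variable: put $Y := X - \mathbb{E}(X)$, so that $\mathbb{E}(Y) = 0$ and $\mathrm{Var}(Y) = \mathrm{Var}(X) =: \sigma^2$, and the event of interest becomes $\{Y \le -a\}$. If $\sigma^2 = 0$ then $X = \mathbb{E}(X)$ almost surely, the left-hand probability is $0$, and the inequality is trivial; so I would assume $\sigma^2 > 0$.

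Next I would introduce a free parameter $u > 0$ and use the event inclusion $\{Y \le -a\} = \{u - Y \ge u + a\} \subseteq \{(u-Y)^2 \ge (u+a)^2\}$, which holds because on the left-hand event $u - Y \ge u + a > 0$, so both sides are positive and squaring is monotone. Applying Markov's inequality to the non-negative random variable $(u-Y)^2$ and using $\mathbb{E}[(u-Y)^2] = u^2 - 2u\,\mathbb{E}(Y) + \mathbb{E}(Y^2) = u^2 + \sigma^2$, I get
\[
  \Pr\bigl(X \le \mathbb{E}(X) - a\bigr) \;=\; \Pr(Y \le -a) \;\le\; \frac{u^2 + \sigma^2}{(u+a)^2}.
\]

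Finally I would optimise the right-hand side over $u > 0$: differentiating $u \mapsto (u^2 + \sigma^2)/(u+a)^2$ gives a numerator proportional to $ua - \sigma^2$, so the unique minimiser is $u^{*} = \sigma^2/a$, and substituting back collapses the bound to $\sigma^2/(\sigma^2 + a^2)$, exactly the claimed estimate. There is no genuine obstacle here: the only trick is to insert the shift $u$ before squaring instead of squaring directly (which would give only the weaker two-sided bound $\sigma^2/a^2$), after which everything reduces to a one-line moment computation and a routine single-variable minimisation.
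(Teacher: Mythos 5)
Your proof is correct: the centering, the shift by a free parameter $u>0$ before squaring, the application of Markov's inequality to $(u-Y)^2$, and the optimisation at $u^{*}=\mathrm{Var}(X)/a$ together give exactly the claimed bound, and your observation that non-negativity of $X$ is not needed is also accurate. The paper itself offers no proof --- it cites the inequality from Motwani--Raghavan --- and your argument is precisely the standard textbook derivation of Cantelli's one-sided inequality, so there is nothing further to reconcile.
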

Our lower bound proof for the problem relies on extending the following theorem from the case of $b=1$ to the case of $b\geq 2$.
\begin{theorem} [I. Dinur et al, 2005~\cite{DinKhotRegev}]\label{Dinur} 
For every integer $l\geq 3$ and every $\epsilon >0$, it is NP-hard to approximate the minimum vertex cover problem on $\ell$-uniform hypergraphs within a factor of  $(\ell-1-\epsilon)$.\@
\end{theorem}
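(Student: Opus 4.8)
\section*{Proof proposal for Theorem \ref{Dinur}}

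The statement is the Dinur--Guruswami--Khot--Regev inapproximability theorem for hypergraph vertex cover, so the plan is a PCP-based reduction rather than an elementary combinatorial argument. The overall shape is the standard two-level composition: an \emph{outer} probabilistically checkable proof system together with an \emph{inner} long-code gadget, arranged so that the accepting configurations of the combined verifier become the hyperedges of an $\ell$-uniform hypergraph, and so that the minimum vertex cover is small in the YES case and large in the NO case. First I would fix the hard source problem. A single layer of Label Cover does not suffice to reach the tight factor $\ell-1-\epsilon$; instead I would start from a \emph{multilayered} Label Cover / multilayered PCP with $L$ layers, consecutive layers joined by projection constraints, and establish its two structural properties: near-perfect completeness, and a ``weak density'' (smoothness) guarantee ensuring that any not-too-small choice of one vertex per layer is forced to be consistent on some pair of layers unless the instance is a NO instance. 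This multilayered outer verifier is precisely the ingredient that makes the tight constant attainable, its NO-case soundness being inherited from the PCP theorem together with parallel repetition.

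Second, I would design the inner gadget on a biased hypercube. For each vertex of the outer instance I attach a block of long-code variables indexed by $\{0,1\}^R$ (with $R$ the label alphabet size), weighting a point $x$ by the product measure $\mu_p$ that sets each coordinate to $1$ independently with probability $p$, where $p$ is tuned so that the complement of a single dictator has measure about $1/(\ell-1)$. The hypergraph's vertices are the pairs (outer vertex, long-code point), and its hyperedges are $\ell$-tuples of points that have \emph{no common $1$-coordinate} once the projection constraints between the chosen layers are applied; reading these $\ell$ points across correlated blocks is the inner consistency test. The crucial elementary fact is that a dictator set $\{x : x_i = 1\}$ contains no such hyperedge (an enclosed hyperedge would force a common $1$ at coordinate $i$), so every dictator is an independent set, and under $\mu_p$ it has measure $p$.

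Third come completeness and soundness. For completeness, given a satisfying labeling $\sigma$ of the outer instance, take in each block the dictator on coordinate $\sigma(v)$; the projection constraints guarantee that these local independent sets glue into a global independent set, so the induced vertex cover has measure about $1-p \approx 1/(\ell-1)$. For soundness I would argue the contrapositive: if the hypergraph admits a vertex cover of measure bounded away from $1$, then in many blocks the complementary independent set has measure noticeably above the random threshold, and a Fourier-analytic influence argument on the biased cube shows such a set must be essentially a junta, i.e.\ concentrated on a few influential coordinates. Decoding those influential coordinates into outer labels, and invoking the weak-density property to enforce cross-layer consistency, yields a labeling satisfying a non-negligible fraction of the outer constraints, contradicting NO-case soundness. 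Balancing the parameters gives YES-cover $\approx 1/(\ell-1)$ against NO-cover $\approx 1-o(1)$, hence inapproximability within $\ell-1-\epsilon$ for every fixed $\epsilon>0$ and every integer $\ell\geq 3$.

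The main obstacle is the soundness step, and specifically the interplay between the biased long-code analysis and the multilayered outer PCP. A naive two-prover reduction with the same gadget loses a factor and yields only hardness near $\ell-\epsilon$ or weaker, because cross-block consistency cannot be enforced without the structural density of several layers; conversely, pushing the biased-cube junta theorem to deliver the sharp measure bound $1/(\ell-1)$ rather than a loose $1/\ell$ is the quantitatively delicate part. I would therefore expect most of the effort to lie in (i) constructing the multilayered PCP with the required completeness, smoothness, and density, and (ii) proving the biased independent-set-to-junta statement with tight constants, while the completeness direction and the hyperedge bookkeeping are comparatively routine.
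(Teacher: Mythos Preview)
Your sketch is a faithful high-level outline of the original Dinur--Guruswami--Khot--Regev argument (multilayered PCP outer verifier, biased long-code inner gadget, Fourier/junta decoding for soundness), and nothing in it is wrong as a proof plan for the theorem as such.

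However, the paper does not prove this theorem at all. It is stated in the preliminaries purely as a quoted result from \cite{DinKhotRegev} and is used only as a black box in the proof of Theorem~\ref{SC}: there the authors assume a hypothetical $(\Delta-b-\epsilon)$-approximation for $b$-vertex cover on $(\Delta+b-1)$-uniform hypergraphs, reduce ordinary vertex cover on $\Delta$-uniform hypergraphs to it by adjoining $b-1$ universal vertices to every edge, and derive a $(\Delta-1-\epsilon/2)$-approximation for vertex cover, contradicting Theorem~\ref{Dinur}. So the ``paper's own proof'' of the statement you were asked about is simply a citation; your PCP reconstruction, while correct in spirit, goes far beyond anything the present paper attempts or needs.
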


A key notion of linear programming relaxations is the concept of Integrality Gap.
\begin{definition}
Let $\cal{I}$ be a set of instances, the Integrality Gap for minimization problems is defined as
\begin{equation*}
\sup_{i\in \cal{I}}{\frac{{\rm Opt}(I)}{{\rm Opt}^{*}(I)}}.
\end{equation*}
\end{definition}
\smallskip
\section{ The multi-randomized rounding algorithm }
Let ${\cal H}=(V,{\cal E})$ be a hypergraph with maximum vertex degree $\Delta$ and maximum edge size $\ell$.
An integer linear programming formulation of the \sbmultcov problem is the following:
\begin{eqnarray*}
 &&\min \sum_{j=1}^m x_j,\\
\mbox{ILP}(\Delta, {\bf b}):\qquad  &&\sum_{j=1}^m a_{ij}x_j\geq b_{i} \quad
 \mbox{ for all } i \in [n],\\
  &&x_j\in \{0,1\}\quad\mbox{ for all } j\in [m],
  \end{eqnarray*}
where $ A=(a_{ij})_{i \in [n], \,j \in [m]}\in \{0,1\}^{n \times m}$
is the vertex-edge incidence matrix of ${\cal H}$
and ${\bf b}=(b_1,b_2,\dots,b_n) \in \mathbb{N}_{\geq 2}^{n}$
is the given integer vector.
 For every vertex $v$, we define
 $\Gamma(v):=\{E\in \mathcal{E} \mathrel{|} v\in E \}$ 
the set of edges incident 
to $v$.\\
The linear programming relaxation LP($\Delta,\,{\bf b}$)
of ILP($\Delta,\,{\bf b}$) is given by relaxing the integrality constraints
to  $x_j\in [0,1]$ for all $j \in [m]$.
Let $\mathrm{Opt}$ resp. ${\rm Opt}^{*}$
be the value of an optimal solution to ILP($\Delta,\,{\bf b}$)
resp. LP($\Delta,\,{\bf b}$).
Let $(x^{\ast}_{1},\ldots,x^{\ast}_{m})$ be the optimal solution of the LP($\Delta,\,{\bf b}$).
So ${\rm Opt}^{*} =\sum_{j=1}^mx^{*}_{j}$
and ${\rm Opt}^{*} \leq \mathrm{Opt}$.

The next lemma shows that the $b_i$ greatest values of the LP variables corresponding to the incident edges for any vertex $v_i$ are all greater than or equal to $\frac{1}{\delta}$.
\begin{lemma}[see~\cite{PSW93}]\label{lemma:1}
Let $b_i,d,\Delta, n \in\mathbb{N} $ with $2\leqslant b_i\leqslant d-1\leqslant \Delta -1, i\in [n]$ .  Let $x_j\in [0,1],j\in [d]$,  such that   $\displaystyle  \sum_{j=1}^{d}x_j\geqslant b_i$.
Then at least $b_i$ of the $x_j$ fulfill the inequality $x_j\geqslant \frac{1}{\delta}$.
\end{lemma}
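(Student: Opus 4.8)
The plan is to prove a slightly stronger statement by a short extremal (sorting‑and‑averaging) argument and then weaken it to the lemma. Recall that throughout the paper $\delta := \Delta - b + 1$ with $b := \min_{i \in [n]} b_i$, so the hypotheses $d \le \Delta$ and $b \le b_i$ give $d - b_i + 1 \le \Delta - b_i + 1 \le \delta$, hence $\frac{1}{d - b_i + 1} \ge \frac{1}{\delta}$. Consequently it suffices to show that at least $b_i$ of the $x_j$ satisfy $x_j \ge \frac{1}{d - b_i + 1}$, since each such $x_j$ then also satisfies $x_j \ge \frac{1}{\delta}$.

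To prove this, I would reindex so that $x_1 \ge x_2 \ge \dots \ge x_d$ and suppose, for contradiction, that fewer than $b_i$ of the $x_j$ are at least $\frac{1}{d - b_i + 1}$. Then $x_{b_i} < \frac{1}{d - b_i + 1}$ (otherwise $x_1,\dots,x_{b_i}$ would all meet the threshold), and by monotonicity $x_j < \frac{1}{d - b_i + 1}$ for every $j \in \{b_i,\dots,d\}$. Bounding the first $b_i - 1$ terms by $1$ and the last $d - b_i + 1$ terms (a nonempty block, since $d - b_i + 1 \ge 2$) by $\frac{1}{d - b_i + 1}$ each, with strict inequality somewhere in the block, yields
\[
\sum_{j=1}^d x_j < (b_i - 1) + (d - b_i + 1)\cdot\frac{1}{d - b_i + 1} = b_i,
\]
contradicting $\sum_{j=1}^d x_j \ge b_i$. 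This establishes the stronger claim, and together with the reduction in the first paragraph it yields the lemma.

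The argument is elementary and carries no real obstacle: it is a pure averaging fact about numbers in $[0,1]$ and uses neither the hypergraph structure nor the concentration inequalities of Section~2. The only points that require care are recognizing that the sharp threshold is $\frac{1}{d - b_i + 1}$ and that it dominates $\frac{1}{\delta}$ under the stated hypotheses, and keeping the final chain of inequalities strict so that it reads $< b_i$ rather than $\le b_i$. The lemma will afterwards be invoked with $d = d(v_i) = \abs{\Gamma(v_i)}$ and the feasibility of the LP relaxation (which gives $\sum_{E \in \Gamma(v_i)} x^{*}_{E} \ge b_i$) to conclude that, for every vertex $v_i$, its $b_i$ largest fractional values $x^{*}_{E}$, $E \in \Gamma(v_i)$, are each at least $\frac{1}{\delta}$.
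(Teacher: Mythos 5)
Your proof is correct: the sorting-and-averaging argument with threshold $\frac{1}{d-b_i+1}$, combined with the observation that $d-b_i+1\le \Delta-b+1=\delta$ because $d\le\Delta$ and $b\le b_i$, establishes the lemma, and the strictness issue in the tail block is handled properly. The paper itself gives no proof of Lemma~\ref{lemma:1} (it is quoted from~\cite{PSW93}), but your argument is just the contrapositive form of the technique the paper uses to prove Lemma~\ref{lemma:2} (order the $x_j$, bound the largest terms by $1$ and the remaining $d-b_i+1$ terms via the pivotal value), so it matches the intended approach.
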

Our second lemma shows that the $b_i-1$ greatest  values of the LP variables corresponding to the incident edges  for any vertex $v_i$ are all  greater than or equal to $\frac{2}{\delta+1}$ and with Lemma~\ref{lemma:1} we take the sum over the $b_i$ greatest  values of the LP variables corresponding to the incident edges  for any vertex $v_i$.
\begin{lemma}\label{lemma:2}
Let $b_i,d,\Delta, n \in\mathbb{N} $ with $2\leqslant b_i\leqslant d-1\leqslant \Delta -1, i\in [n]$ .  Let $x_j\in [0,1],j\in [d]$,  such that   $\displaystyle  \sum_{j=1}^{d}x_j\geqslant b_i$.
Then at least $b_{i}-1$ of the $x_j$ fulfill the inequality $x_j\geqslant \frac{2}{\delta+1}$ and there exists an element $x_j$, distinct to all of them, that fulfills the inequality $x_j\geqslant \frac{1}{\delta}$ .
\end{lemma}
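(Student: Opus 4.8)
The plan is a short sorting-and-counting (pigeonhole) argument that reuses Lemma~\ref{lemma:1} for the ``$\frac{1}{\delta}$'' part. Relabel the variables so that $x_{(1)}\geq x_{(2)}\geq\cdots\geq x_{(d)}$. The only facts about the parameters I will use are $d\leq\Delta$ (immediate from $d-1\leq\Delta-1$) and $b_i\geq b=\min_{j}b_j$, which together give $d-b_i+1\leq\Delta-b_i+1\leq\Delta-b+1=\delta$, and hence also $d-b_i+2\leq\delta+1$.

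The heart of the matter is the inequality $x_{(b_i-1)}\geq\frac{2}{\delta+1}$. I would prove it by contradiction: if $x_{(b_i-1)}<\frac{2}{\delta+1}$, then each of the $d-b_i+2$ variables $x_{(b_i-1)},x_{(b_i)},\dots,x_{(d)}$ is strictly below $\frac{2}{\delta+1}$, while the remaining $b_i-2$ variables are at most $1$. Summing,
\[
\sum_{j=1}^{d}x_j<(b_i-2)+(d-b_i+2)\cdot\frac{2}{\delta+1}\leq (b_i-2)+2=b_i,
\]
using $d-b_i+2\leq\delta+1$ in the last step; this contradicts $\sum_j x_j\geq b_i$. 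Consequently $x_{(1)},\dots,x_{(b_i-1)}$ all satisfy $x_j\geq\frac{2}{\delta+1}$, which is the first assertion. For the second assertion I would invoke Lemma~\ref{lemma:1} with the same $b_i,d,\Delta$: it yields at least $b_i$ indices with $x_j\geq\frac{1}{\delta}$, so in particular $x_{(b_i)}\geq\frac{1}{\delta}$, and $x_{(b_i)}$ is by construction distinct from $x_{(1)},\dots,x_{(b_i-1)}$. (One could equally inline the same averaging estimate with threshold $\frac{1}{\delta}$ and $b_i-1$ ``large'' values, using $d-b_i+1\leq\delta$.)

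I would also record the numerical byproduct needed in the algorithm's analysis, the bound on the sum of the $b_i$ largest values:
\[
\sum_{j=1}^{b_i-1}x_{(j)}+x_{(b_i)}\ \geq\ (b_i-1)\,\frac{2}{\delta+1}+\frac{1}{\delta}.
\]
A small consistency remark worth including is that $\frac{2}{\delta+1}\geq\frac{1}{\delta}$ whenever $\delta\geq1$, so the $b_i-1$ variables produced by the first assertion are automatically among the $b_i$ variables guaranteed by Lemma~\ref{lemma:1}; designating $x_{(b_i)}$ as the ``extra'' element therefore involves no double counting.

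I do not expect a genuine obstacle here: the statement is an averaging/pigeonhole bound. The only points requiring care are the estimate $d-b_i+2\leq\delta+1$ (which genuinely needs both $d\leq\Delta$ and $b_i\geq b$) and the degenerate case $b_i=2$, where the ``$b_i-2$'' block is empty and the displayed inequality reduces to $d\cdot\frac{2}{\delta+1}<2$; this still holds because $d\leq\delta+1$ in that case.
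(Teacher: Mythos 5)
Your proposal is correct and follows essentially the same route as the paper: sort the values, bound the $b_i-2$ largest by $1$ and the remaining $d-b_i+2$ values via the $(b_i-1)$-st largest, use $d-b_i+2\leq\Delta-b+2=\delta+1$ to conclude $x_{(b_i-1)}\geq\frac{2}{\delta+1}$, and invoke Lemma~\ref{lemma:1} to get the additional element $x_{(b_i)}\geq\frac{1}{\delta}$. The only cosmetic difference is that you phrase the averaging step as a contradiction while the paper writes the same inequality chain directly; your extra remarks (the $b_i=2$ case and the no-double-counting observation) are sound but not needed beyond what the paper does.
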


\begin{proof}
W.l.o.g.\ we suppose $x_1\geq x_2\geq \cdots \geq x_{b_i }\geq \cdots\geq x_d$.\\
Hence $ b_i-2\geq   \displaystyle  \sum_{j=1}^{b_i-2}x_j$
and  $ (d -b_i+2) x_{b_{i-1}}\geq   \displaystyle  \sum_{j=b_i-1}^{d}x_j$.\\
Then
\begin{eqnarray*}
b_i-2+ (\Delta -b+2) x_{b_i-1} &\geq &b_i-2+ (\Delta -b_i+2) x_{b_i-1}\\
&\geq &b_i-2+ (d -b_i+2) x_{b_i-1}\\
  &\geq&  \sum_{j=1}^{b_i-2}x_j+  \sum_{j=b_i-1}^{d}x_j 
 =  \displaystyle  \sum_{j=1}^{d}x_j \\
 &\geq & b_i
\end{eqnarray*}
So we have
$x_{b_i-1}\geq \frac{2}{\delta+1}$.\\
Since for all $j\in [b_i-1]\; , \ x_{j} \geq x_{b_i-1 }$
then for all $j\in [b_i-1]\;  ,\;  x_{j}\geq \frac{2}{\delta+1}$.\\
Furthermore, by Lemma $1$  and the assumption on the orders of the  variables $x_j$, for all $j\in [b_i]\;$  we have  $x_{j}\geq \frac{1}{\delta}$
and particularly 
$x_{b_i}\geq \frac{1}{\delta}$.
\end{proof}

\subsection{The algorithm } 
In this section we present an algorithm with conditioned randomized rounding based on the properties satisfied by two generated sets, $C_1$ and $C_2$.

\begin{algorithm}
\label{alg:msetcover1}
\caption{ SET MULTICOVER}
\SetKwInOut{Input}{Input}\SetKwInOut{Output}{Output}
\Input{A hypergraph $\mathcal{H}=(V,\, \mathcal{E})$ with maximum degree $\Delta$ and maximum hyperedge size $\ell$,
numbers $b_i\in \mathbb{N}_{\geq 2} \text{ for }  i\in[n]$, $  b:=\min_{i\in [n]}b_{i}$, $\epsilon \in (0,1)$, a constant $ k\in \mathbb{N}_{\geq 2}$  and
 $\delta=\Delta-b+1$.}
\Output{A  set multicover $C$}
   \begin{enumerate}
     \item Initialize $C:=\emptyset $. Set $\lambda=\frac{\delta+1}{2}\;$, $\alpha=\frac{(b-1)\delta \epsilon^k}{6\ell}\times \exp\left(a_{k,\epsilon}\right)$ with $a_{k,\epsilon}=\frac{k(1-\epsilon) +(\delta-1)(1-\epsilon^a)}{2}$   and $ \lambda_0 = (1-\epsilon) \delta $.
     \item  Obtain an optimal solution $x^* \in [0,1]^m$ by solving the LP($\Delta,\,{\bf b}$)  relaxation.
     \item \text{Set} $C_1:=\{E_j \in \mathcal{E} \mathrel{|} x_j^{\ast}\geq\frac{1}{\lambda}\}$, $\ C_2:=\{E_j \in \mathcal{E} \mathrel{|} \frac{1}{\lambda}> x_j^{\ast}\geq\frac{1}{\delta}\}$ \\\text{and} $C_3:=\{E_j\in \mathcal{E}\mathrel{|} 0<x_j^{\ast}<\frac{1}{\delta}\}$.
\item Take all edges of the set $C_1$ in the cover $C$.
\item if $|C_1|\geq \alpha \cdot \mathrm{Opt}^{*}$  then return $C=C_1\cup C_2$.\\
 Else \text{(Multi-randomized Rounding)}
\begin{enumerate}
	\item  For all edges $E_j\in C_2$  include the edge $E_j$ in the cover $C$,
independently for all such $E_j$, with probability $\lambda_0 x_j^*$, $k$ times.\\
$\left( \text{ If, in any of these $k$ biased coin flips
shows head, include the edge } E_j \text{ in the cover.}\right)$\\
	\item
 For all edges $E_j\in C_3$ include the edge $E_j$ in
      the cover $C$, independently for all such $E_j$,  with probability $(1-\epsilon^k) \delta x_j^*$.
     \item (Repairing) Repair the cover $C$ (if necessary) as follows: Include arbitrary edges from $C_2$, incident to the vertices $v_i$  not fully covered, to $C$ until all vertices  are fully covered.
     \item Return the cover  $C$.
    \end{enumerate}
    \end{enumerate}
\end{algorithm}

\noindent 
In step $2$ we solve the linear programming relaxation LP($\Delta,\,{\bf b}$) in polynomial time, using some known polynomial-time procedure, e.g.\ the interior point method.
Next we take into the cover all edges of the sets $C_1$ resp.\ $C_2$.
Since the LP variable value $x^*_j$ that corresponds to an  edge $E_j$ from  the set $ C_1$  is greater than or equal to $ \frac{2}{\delta+1} $ and the value $x^*_j$ that corresponds to an  edge $E_j$ from  the set $ C_2$  is less than $ \frac{2}{\delta+1} $, we have
\begin{equation}\label{cover}
|C_1|+|C_2|=|C|   \text{\quad  and\quad  } C_1\cap C_2=\emptyset
\end{equation}\\
\subsection{Analysis of the algorithm}
\textbf{Case $ \mathbf{|C_1|\geq \alpha \cdot \mathrm{Opt}^{*}}$.}
\begin{theorem}\label{theoremcase1}
Let $\mathcal{H}$ be a hypergraph with maximum vertex degree $\Delta$ and maximum edge size $\ell$. Let $\alpha=\frac{(b-1)\delta\epsilon^k }{6 \ell}\times \exp\left(a_{k,\epsilon}\right)$ with $a_{k,\epsilon}=\frac{k(1-\epsilon) +(\delta-1)(1-\epsilon^a)}{2}$ as defined in Algorithm~\ref{alg:msetcover1}. If $|C_1|\geq \alpha \cdot \mathrm{Opt}^{*}$ then Algorithm~\ref{alg:msetcover1} returns a set multicover $C$ such that 
$$|C| < \left(1- \frac{(b-1)\epsilon^k}{18 \ell}\times \exp\left(a_{k,\epsilon}\right) \right)\delta \cdot \mathrm{Opt}^{*} $$
\end{theorem}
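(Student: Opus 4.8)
The plan is to reduce the claim to a short piece of LP-weight bookkeeping. Since we are in the case $|C_1|\ge\alpha\cdot\mathrm{Opt}^{*}$, Algorithm~\ref{alg:msetcover1} returns $C=C_1\cup C_2$, so by~\eqref{cover} we have $|C|=|C_1|+|C_2|$. First I would check that $C$ really is a set multicover: $C_1\cup C_2$ is precisely the set of edges $E_j$ with $x_j^{*}\ge\frac1\delta$, and since for each vertex $v_i$ the incident LP values satisfy $\sum_{E_j\in\Gamma(v_i)}x_j^{*}\ge b_i$, Lemma~\ref{lemma:1} gives at least $b_i$ incident edges in $C_1\cup C_2$; hence every $v_i$ is covered at least $b_i$ times. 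It then remains only to bound $|C_1|+|C_2|$ from above by the claimed fraction of $\delta\cdot\mathrm{Opt}^{*}$.

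The one genuine idea is to use that, by construction, every edge of $C_1$ carries LP weight $x_j^{*}\ge\frac1\lambda=\frac{2}{\delta+1}$, strictly more than the weight $\frac1\delta$ merely guaranteed for edges of $C_2$, and to turn the assumed lower bound on \emph{how many} such heavy edges exist into a saving below the trivial ratio $\delta$. Starting from $\mathrm{Opt}^{*}=\sum_{j=1}^{m}x_j^{*}\ge\sum_{E_j\in C_1}x_j^{*}+\sum_{E_j\in C_2}x_j^{*}$ and these two weight bounds, I obtain
\begin{equation*}
\tfrac1\delta\,|C_2|\;\le\;\sum_{E_j\in C_2}x_j^{*}\;\le\;\mathrm{Opt}^{*}-\tfrac{2}{\delta+1}\,|C_1|,
\end{equation*}
hence $|C_2|\le\delta\,\mathrm{Opt}^{*}-\frac{2\delta}{\delta+1}|C_1|$ and therefore
\begin{equation*}
|C|\;=\;|C_1|+|C_2|\;\le\;\delta\,\mathrm{Opt}^{*}-\Bigl(\tfrac{2\delta}{\delta+1}-1\Bigr)|C_1|\;=\;\delta\,\mathrm{Opt}^{*}-\tfrac{\delta-1}{\delta+1}\,|C_1|.
\end{equation*}

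Finally I would substitute the hypothesis $|C_1|\ge\alpha\cdot\mathrm{Opt}^{*}$ with $\alpha=\frac{(b-1)\delta\epsilon^{k}}{6\ell}\exp(a_{k,\epsilon})$, giving $|C|\le\delta\,\mathrm{Opt}^{*}\bigl(1-\frac{\delta-1}{\delta+1}\cdot\frac{(b-1)\epsilon^{k}}{6\ell}\exp(a_{k,\epsilon})\bigr)$, and close with the elementary bound $\frac{\delta-1}{\delta+1}\ge\frac13$, valid for all $\delta\ge2$ (while $\delta=1$ means $\Delta=b$, forcing the trivial multicover $C=\mathcal{E}$, so nothing is to prove). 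This turns the $\frac16$ into $\frac1{18}$ and produces exactly $|C|<\bigl(1-\frac{(b-1)\epsilon^{k}}{18\ell}\exp(a_{k,\epsilon})\bigr)\delta\cdot\mathrm{Opt}^{*}$. For strictness: when $\delta\ge3$ the estimate $\frac{\delta-1}{\delta+1}>\frac13$ is already strict and $(b-1)\epsilon^{k}\exp(a_{k,\epsilon})>0$, so the bound strictly improves; for $\delta=2$ a short inspection of the per-vertex constraints shows that equality throughout the chain would force a $3$-regular all-$\frac23$ instance, in which $|C_1|=\frac32\mathrm{Opt}^{*}$ cannot equal $\alpha\,\mathrm{Opt}^{*}$ since $\alpha<\frac32$, so the inequality is strict there too.

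I do not expect a serious obstacle here: once Lemma~\ref{lemma:1} supplies feasibility, the rest is accounting driven by the single observation that $C_1$-edges are strictly heavier than the $C_2$-threshold. The only parts that need care are carrying the numerical constant correctly through the bound on $\frac{\delta-1}{\delta+1}$ and nailing down the strict inequality in the small-$\delta$ corner.
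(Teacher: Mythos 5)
Your argument is correct and is essentially the paper's own proof, just rearranged: both rest on the weight bounds $x_j^{*}\ge\frac{2}{\delta+1}$ on $C_1$ and $x_j^{*}\ge\frac1\delta$ on $C_2$, the identity $|C|=|C_1|+|C_2|$, the hypothesis $|C_1|\ge\alpha\,\mathrm{Opt}^{*}$, and the bound $\frac{\delta-1}{\delta+1}\ge\frac13$ for $\delta\ge2$, yielding the same final estimate. Your extra remarks (feasibility via Lemma~\ref{lemma:1} and the strictness discussion) are reasonable additions but do not change the route, which matches the paper's.
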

\vskip 0.2cm\noindent
\begin{proof} 
The proof is straightforward, using the definitions
of the sets $C_1$ and $C_2$.
\begin{eqnarray*}
 \delta \mathrm{Opt}^{*}= \sum_{j=1}^m\delta x^*_j
 &\geq & \displaystyle  \sum_{E_j\in C_1}\delta x^*_j+ \sum_{E_j\in C_2}\delta x^*_j \\
  &\geq &\frac{2\delta}{\delta+1}| C_1|+|C_2| \\
  &\geq &\frac{2\delta}{\delta+1}| C_1|+\left(|C|-|C_1| \right)\\
 &\geq &\frac{\delta-1}{\delta+1} |C_1|  + |C|\\
   &\overset{\delta \geq 2}{\geq} &\frac{1}{3} |C_1|  + |C|\\
   &\geq &\frac{1}{3}\alpha \cdot \mathrm{Opt}^{*}  + |C|.
\end{eqnarray*}
Hence
$$|C|\leq  \left(1- \frac{(b-1)\epsilon^k}{18 \ell}\times \exp\left(a_{k,\epsilon}\right) \right)\delta \cdot \mathrm{Opt}^{*}$$
\end{proof}
 \textbf{Case $ \mathbf{|C_1|< \alpha \cdot \mathrm{Opt}^{*}}$.}
 
 Let $X_{1},\ldots,X_{m}$ be $\{0,1\}$-random variables defined as follows:
\begin{eqnarray*}
 X_j=
 \begin{cases}
  1 &\text{if the edge}\, E_j \, \text{was picked into the cover
 before repairing}\\
  0 &\text{otherwise}.
 \end{cases}
\end{eqnarray*}
Note that the $X_1,\ldots,X_m$ are independent for a given $x^* \in [0,1]^m$. 
For all  $i\in[n] $ we define the $\{0,1\}$- random variables $Y_{i}$ as follows:
\begin{eqnarray*}
 Y_{i}= 
 \begin{cases}
  1 &\text{if the vertex}~ v_{i}~\text{is fully covered before repairing}\\
  0 &\text{otherwise}.
 \end{cases}
\end{eqnarray*}
 We denote by $X:=\sum_{j=1}^m X_j$ and $Y:=\sum_{i=1}^n Y_i$ the cardinality of the cover and the cardinality of the set of fully covered vertices before the step of repairing, respectively. At this step by Lemma~\ref {lemma:2}, one more edge for each vertex is at most needed to be fully covered. The cover $C$ obtained by Algorithm~\ref{alg:msetcover1} is bounded by
\begin{equation}\label{expection}
\abs C \leq X+n-Y.
\end{equation}
\noindent Our next lemma provides upper bounds on the expectation of the random variable $X$ and the expectation and variance of the random variable $Y$, which we will use to proof Theorem~\ref{Maintheorem}. This is a restriction of Lemma $4$ in~\cite{EMS16} to the last case in Algorithm~\ref{alg:msetcover1}.
\begin{lemma}\label{lemma:random}
Let $l$ and $\Delta$ be the maximum size of an edge and the maximum vertex degree, respectively.
Let $\epsilon \in \left[\frac{\delta-1}{2\delta},\left(\frac{\delta-1}{2\delta}\right)^{\frac{1}{k}}\right]$, $a_{k,\epsilon}=\frac{k(1-\epsilon) +(\delta-1)(1-\epsilon^a)}{2}$,  $ \lambda_0 = (1-\epsilon) \delta $ and $\lambda=\frac{\delta+1}{2}$
as in \mbox{Algorithm~\ref{alg:msetcover1}}. We have
\begin{description}
\item $\mathrm{(i)}$ $\mathbb{E}(Y)\geq (1-\exp\left( -2a_{k,\epsilon} \right))n$.
\smallskip
\item $\mathrm{(ii)}$ ${\rm Var}(Y)\leq 2n^2\exp\left( -2a_{k,\epsilon} \right)$.
\smallskip
\item $\mathrm{(iii)}$
$ \mathbb{E}(X)\leq (1-\epsilon^k)\delta \mathrm{Opt}^{*}$.
\smallskip
\item $\mathrm{(iv)}$
$ \dfrac{(b-1)n}{\alpha \ell} <  \mathrm{Opt}^{*}$.
\end{description}
\end{lemma}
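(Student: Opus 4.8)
The plan is to prove the four items more or less independently, using only Lemma~\ref{lemma:2} and the admissible range for $\epsilon$; throughout we are in the case $|C_1|<\alpha\,\mathrm{Opt}^{*}$. I would dispatch $\mathrm{(iv)}$ first, since it is purely combinatorial. By Lemma~\ref{lemma:2}, every vertex $v_i$ is incident to at least $b_i-1\geq b-1$ edges $E_j$ with $x_j^{*}\geq \frac{2}{\delta+1}=\frac1\lambda$, i.e.\ at least $b-1$ edges of $C_1$; counting vertex--edge incidences between $V$ and $C_1$ gives $\ell\,|C_1|\geq\sum_{i}(b_i-1)\geq (b-1)n$, so $|C_1|\geq\frac{(b-1)n}{\ell}$. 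Combining this with the case hypothesis $|C_1|<\alpha\,\mathrm{Opt}^{*}$ yields $\mathrm{Opt}^{*}>\frac{(b-1)n}{\alpha\ell}$ at once.

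For $\mathrm{(iii)}$ I would establish the pointwise bound $\Pr[E_j\text{ is taken before repairing}]\leq(1-\epsilon^k)\delta x_j^{*}$ for every $j$ and then sum over $j$ to get $\mathbb{E}(X)\leq(1-\epsilon^k)\delta\sum_j x_j^{*}=(1-\epsilon^k)\delta\,\mathrm{Opt}^{*}$. For $E_j\in C_3$ this holds with equality by construction. For $E_j\in C_1$ the probability is $1$ and the bound reads $x_j^{*}\geq\frac{1}{(1-\epsilon^k)\delta}$, which follows from $x_j^{*}\geq\frac2{\delta+1}$ together with $\epsilon\leq\big(\frac{\delta-1}{2\delta}\big)^{1/k}$. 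The genuine work is the case $E_j\in C_2$: the probability of being picked in one of $k$ independent rounds is $1-(1-\lambda_0 x_j^{*})^k$, and writing $u=\delta x_j^{*}\in[1,\frac{2\delta}{\delta+1})$ (so $\lambda_0 x_j^{*}=(1-\epsilon)u$, and $(1-\epsilon)u\leq 1$ because $\epsilon\geq\frac{\delta-1}{2\delta}$) the claim becomes $1-(1-(1-\epsilon)u)^k\leq(1-\epsilon^k)u$. I would prove this by showing that $f(u):=(1-\epsilon^k)u-1+(1-(1-\epsilon)u)^k$ is convex on $[1,\frac1{1-\epsilon}]$, vanishes at $u=1$, and has $f'(1)=(1-\epsilon^k)-k(1-\epsilon)\epsilon^{k-1}\geq0$, the last inequality because $1-\epsilon^k=(1-\epsilon)\sum_{i=0}^{k-1}\epsilon^i\geq k(1-\epsilon)\epsilon^{k-1}$; hence $f\geq0$ on the whole interval. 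This $C_2$-edge inequality, and the bookkeeping showing that the two endpoints of the admissible interval for $\epsilon$ are precisely what is needed to make it and the $C_1$-case bound hold simultaneously, is the step I expect to be the main obstacle.

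For $\mathrm{(i)}$ I would fix a vertex $v_i$ and bound $\Pr[Y_i=0]$. If $v_i$ is incident to at least $b_i$ edges of $C_1$ it is fully covered deterministically, so $\Pr[Y_i=0]=0$. Otherwise, by Lemma~\ref{lemma:2} it is incident to exactly $b_i-1$ edges of $C_1$, to at least one edge of $C_2$, and its remaining incident edges lie in $C_2\cup C_3$; thus $v_i$ fails to be covered before repairing exactly when none of its $C_2\cup C_3$ edges is picked, an event of probability $\prod_{E_j\in C_2,\,v_i\in E_j}(1-\lambda_0 x_j^{*})^k\cdot\prod_{E_j\in C_3,\,v_i\in E_j}\big(1-(1-\epsilon^k)\delta x_j^{*}\big)$. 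Using $1-z\leq e^{-z}$ this is at most $\exp\!\big(-k(1-\epsilon)\delta\,S_2-(1-\epsilon^k)\delta\,S_3\big)$, where $S_2,S_3$ denote the LP masses of the $C_2$- resp.\ $C_3$-edges incident to $v_i$. The $b_i-1$ incident $C_1$-edges carry LP mass at most $b_i-1$ while all incident edges carry mass at least $b_i$, so $S_2+S_3\geq1$; and $S_2\geq\frac1\delta$ since the guaranteed incident $C_2$-edge has value at least $\frac1\delta$. Because $k(1-\epsilon)\geq1-\epsilon^k$, minimising the exponent over $\{S_2\geq\frac1\delta,\ S_2+S_3\geq1\}$ puts as little mass as possible on $S_2$, i.e.\ $S_2=\frac1\delta,\ S_3=\frac{\delta-1}\delta$, giving exponent at least $k(1-\epsilon)+(\delta-1)(1-\epsilon^k)=2a_{k,\epsilon}$; hence $\Pr[Y_i=0]\leq e^{-2a_{k,\epsilon}}$ for every $i$, and by linearity $\mathbb{E}(Y)=\sum_i\big(1-\Pr[Y_i=0]\big)\geq n\big(1-e^{-2a_{k,\epsilon}}\big)$.

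Finally, $\mathrm{(ii)}$ follows from $\mathrm{(i)}$ by a crude second-moment estimate that needs no independence among the $Y_i$: since $0\leq Y\leq n$ we have $\mathbb{E}(Y^2)\leq n^2$, while $\mathrm{(i)}$ gives $\mathbb{E}(Y)\geq n(1-q)$ with $q:=e^{-2a_{k,\epsilon}}$, so $(\mathbb{E}Y)^2\geq n^2(1-q)^2$, and therefore ${\rm Var}(Y)=\mathbb{E}(Y^2)-(\mathbb{E}Y)^2\leq n^2\big(1-(1-q)^2\big)=n^2(2q-q^2)\leq 2n^2 q$. Assembling the four parts completes the proof.
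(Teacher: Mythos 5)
Your proposal is correct and follows essentially the same route as the paper: the double-counting bound on $|C_1|$ for (iv), the pointwise bound $\Pr(X_j=1)\leq(1-\epsilon^k)\delta x_j^{*}$ summed over all edges for (iii), the exponential bound on $\Pr(Y_i=0)$ using $S_2\geq\frac{1}{\delta}$, $S_2+S_3\geq 1$ and $k(1-\epsilon)\geq 1-\epsilon^k$ for (i), and the crude second-moment estimate for (ii). The only cosmetic differences are that you justify the $C_2$-edge inequality by convexity of $(1-\epsilon^k)u-1+(1-(1-\epsilon)u)^k$ where the paper uses monotonicity of $x\mapsto\frac{1-(1-x)^k}{x}$, and you phrase the exponent bound in (i) as a small linear minimization where the paper regroups the exponent algebraically.
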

\begin{proof}
(i) Let $i \in [n]$, $r = d(i) - b_i + 1$.
If $\abs{C_1 \cap \Gamma(v_i)} \geq b_i$, then the vertex $v_i$ is fully covered and $\Pr(Y_{i}=0)=0$.
Otherwise we get by Lemma~\ref{lemma:2} that
$\abs{C_1 \cap \Gamma(v_i)} = b_i-1$ and there exists at least one more edge from $C_2$ with $x_j\geq \frac{1}{\delta}$,
so we have $\sum_{E_j \in \Gamma(v_i) \cap C_2} x_j^* \geq \frac{1}{\delta}$
and by the inequality constraints it holds that $\sum_{E_j \in \Gamma(v_i) \cap \left( C_2\cup C_3\right)} x_j^* \geq 1$. Therefore
\begin{eqnarray*}
\Pr(Y_i = 0) &=& \left(\prod_{E_j \in \Gamma(v_i) \cap C_2}(1- \lambda_0 x_j^*) \right)^{k}
\prod_{E_j \in \Gamma(v_i) \cap C_3}\left(1- (1-\epsilon^k)\delta x_j^*\right) 
\\
&=& \prod_{E_j \in \Gamma(v_i) \cap C_2}\left(1- \lambda_0 x_j^*\right) ^{k}
\prod_{E_j \in \Gamma(v_i) \cap C_3}\left(1- (1-\epsilon^k)\delta x_j^*\right)
\\
&\leq &\prod_{E_j \in \Gamma(v_i) \cap C_2} \exp(-k\lambda_0 x_j^* )
\prod_{E_j \in \Gamma(v_i) \cap C_3}\exp(-(1-\epsilon^k)\delta x_j^*)
\\
&=&  \exp\left(-k(1-\epsilon)\delta \sum_{E_j \in \Gamma(v_i) \cap C_2} x_j^* \right)
\cdot
 \exp\left(-(1-\epsilon^k)\delta \sum_{E_j \in \Gamma(v_i) \cap C_3} x_j^* \right)
\\
&=&  \exp\left( \left(-k(1-\epsilon) +(1-\epsilon^k)\right)\delta \sum_{E_j \in \Gamma(v_i) \cap C_2} x_j^* \right)
\cdot
 \exp\left(-(1-\epsilon^k)\delta \sum_{E_j \in \Gamma(v_i) \cap\left(C_2\cup C_3\right)} x_j^* \right).
\end{eqnarray*} 
Since 
$1-\epsilon^k=(1-\epsilon)\sum_{i=0}^{k-1} \epsilon^i \leq k(1-\epsilon) $, we have $-k(1-\epsilon) +1-\epsilon^k \leq 0$.

It follows that
\begin{eqnarray*}
\Pr(Y_i = 0) &\leq &  \exp\left( -k(1-\epsilon) +(1-\epsilon^k) \right)
\cdot
 \exp\left(-(1-\epsilon^k)\delta  \right)
\\
&= &  \exp\left( -2a_{k,\epsilon} \right).
\end{eqnarray*}

Therefore
\begin{align*}
\mathbb{E}(Y) &= \sum_{i=1}^n \Pr(Y_i = 1)
= \sum_{i=1}^n (1 - \Pr(Y_i = 0))\\
&\geq \sum_{i=1}^n (1 - \exp\left(-2a_{k,\epsilon}\right))\\
&\geq (1-\exp\left(-2a_{k,\epsilon}\right))n.
\end{align*}

(ii)
Since
\begin{equation*}
Y=\sum_{i=1}^{n}Y_{i} \leq n,
\end{equation*}
we have
\begin{equation*}
\mathbb{E}(Y^2)\leq n^2.
\end{equation*}
Thus,
\begin{align*}
{\rm Var}(Y) &= \mathbb{E}(Y^2)-\mathbb{E}(Y)^2
\leq  n^2 -(1-\exp\left(-2a_{k,\epsilon}\right))^2n^2 \\
&\leq n^2\left(1-(1-\exp\left(-2a_{k,\epsilon}\right))^2\right)
\\&\leq 
2n^2\exp\left( -2a_{k,\epsilon} \right).
\end{align*}

(iii) Let $E_j$ be an edge from $C_2$. By Lemma~\ref{lemma:2} we have $ \frac{1}{\delta}\leq x^*_j< \frac{2}{\delta+1}$.\\
Recall that we include independently the edge $E_j$ in the cover $C$, with probability $\lambda_0 x_j^*$, $k$ times.
Since $\frac{\delta-1}{2\delta}\leq \epsilon$, we have $  1-\epsilon \leq \lambda_0 x^*_j< \frac{2}{\delta+1}(1-\epsilon)\delta \leq \frac{2}{\delta+1}(1-\frac{\delta-1}{2\delta})\delta =1 $.\\
Furthermore
with $\epsilon\leq \left( \frac{\delta-1}{2\delta}\right)^{\frac{1}{k}} $
we have $\left(1-\epsilon^k\right)\delta \geq \left(1-\frac{\delta-1}{2\delta}\right)\delta= \frac{\delta+1}{2}=\lambda $.\\
Then
\begin{equation}\label{probability4}
\lambda \leq \left(1-\epsilon^k\right)\delta.
\end{equation}
Clearly $\Pr\left(X_j=1\right)= 1-\left( 1-\lambda_0 x^*_j \right)^{k}$.

 Define the function $f$ by $ f(x)=\frac{1- (1-x)^{k}}{x}$.\\
  $f$ is strictly decreasing
 on $(0,1]$. Therefore, \\
 $\  \frac{1- \left(1-\lambda_0 x^*_j\right)^{k}}{\lambda_0 x^*_j} \leq \frac{1- \left(1-(1-\epsilon)\right)^{k}}{1-\epsilon}= \frac{1- \epsilon^{k}}{1-\epsilon}$.
 
 It follows that $ \Pr\left(X_j=1\right)  \leq \frac{1- \epsilon^{k}}{1-\epsilon} \cdot\lambda_0 x^*_j $.

 Then 
 \begin{equation}\label{probability5}
\Pr\left(X_j=1\right) \leq \left(1-\epsilon^k\right)\delta x^*_j.
\end{equation}

By using the LP relaxation and the definition of the sets
$C_1$ and $C_2$, and since $\lambda x^*_j\geq 1$ for all ${E_j\in C_1}$, we get
\begin{eqnarray*}
\mathbb{E}(X)&=& |C_1|+
\sum_{E_j\in C_2}\Pr\left(X_j=1\right)
+\sum_{E_j\in C_3}\Pr\left(X_j=1\right)\\
&\overset{(\ref{probability5})}{\leq} &\sum_{E_j\in C_1}\lambda x^*_j + \sum_{E_j\in C_2}(1-\epsilon^k)\delta x^*_j+ \sum_{E_j\in C_3}(1-\epsilon^k)\delta x^*_j\\
&\overset{(\ref{probability4})}{\leq} & (1-\epsilon^k)\delta\sum_{E_j\in \mathcal{E}} x^*_j \\
&\leq &  (1-\epsilon^k)\delta \mathrm{Opt}^{*} .
\end{eqnarray*}

(iv)
 Let us consider $\tilde{\mathcal{H}} $
   the subhypergraph induced by $C_1$ in which degree equality gives\\
  \begin{equation*}\label{double-countingSsuperieur}
  \sum_{i\in V}d(i)=\sum_{E_j\in C_1}|E_j|.
\end{equation*}\\
As the minimum vertex degree in the subhypergraph 
$\tilde{\mathcal{H}} $ is $b-1$ with $b:=\min_{i\in [n]}b_{i}$,
 we have
$$ (b-1)n\leq \sum_{i\in V}d(i)=\sum_{E\in C_1}|E_j|\leq \ell |C_1|.$$
Therefore
\begin{equation*}
 \frac{(b-1)n}{\ell}\leq | C_1|.
\end{equation*}
 Since  $|C_1|< \alpha \cdot \mathrm{Opt}^{*}$ we obtain
\begin{eqnarray*}
\frac{(b-1)n}{\alpha \ell} < \mathrm{Opt}^{*}.
\end{eqnarray*}
\end{proof}
\begin{theorem}\label{Maintheorem}
Let $\mathcal{H}$ be a hypergraph with fixed maximum vertex degree $\Delta$ and maximum edge size $\ell$. Let $\alpha=\frac{(b-1)\delta \epsilon^k }{6 \ell}\times \exp\left(a_{k,\epsilon}\right)$ with $ a_{k,\epsilon}=\frac{ k(1-\epsilon) +(\delta-1)(1-\epsilon^k)}{2}$ and $\epsilon \in \left[\frac{\delta-1}{2\delta},\left(\frac{\delta-1}{2\delta}\right)^{\frac{1}{k}}\right]$ as in Algorithm~\ref{alg:msetcover1}. The Algorithm~\ref{alg:msetcover1} returns a set multicover $C$ such that 
$$|C| < \max\left\{ \left(1 - \frac{1}{2}\left(1-\epsilon\right)\epsilon^k \right)\delta, \left(1- \frac{(b-1)\epsilon^k}{18 \ell}\times \exp\left(a_{k,\epsilon}\right) \right)\delta \right\}\cdot \mathrm{Opt}^{*} $$
with probability greater than $0.65$.
\end{theorem}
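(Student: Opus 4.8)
The plan is to split along the two branches of Algorithm~\ref{alg:msetcover1}. If $|C_1|\ge\alpha\cdot\mathrm{Opt}^{*}$, Theorem~\ref{theoremcase1} already yields $|C|<\big(1-\tfrac{(b-1)\epsilon^{k}}{18\ell}\exp(a_{k,\epsilon})\big)\delta\cdot\mathrm{Opt}^{*}$ deterministically, which is at most the maximum in the statement; so it suffices to treat the branch $|C_1|<\alpha\cdot\mathrm{Opt}^{*}$ and prove there that the returned cover satisfies $|C|<\big(1-\tfrac12(1-\epsilon)\epsilon^{k}\big)\delta\cdot\mathrm{Opt}^{*}$ with probability $>0.65$. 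The starting point is inequality~(\ref{expection}), $|C|\le X+(n-Y)$: I would bound $X$ from above by the Chernoff--Hoeffding bound (Theorem~\ref{Doerr}) using $\mathbb{E}(X)\le(1-\epsilon^{k})\delta\,\mathrm{Opt}^{*}$ from Lemma~\ref{lemma:random}(iii), bound $n-Y$ from above by the Chebychev--Cantelli inequality (Theorem~\ref{Che-Can}) using $\mathbb{E}(Y)\ge(1-\exp(-2a_{k,\epsilon}))n$ and $\mathrm{Var}(Y)\le2n^{2}\exp(-2a_{k,\epsilon})$ from Lemma~\ref{lemma:random}(i),(ii), and then take a union bound.

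For $X$: Theorem~\ref{Doerr} with $\mu=(1-\epsilon^{k})\delta\,\mathrm{Opt}^{*}$ and a small constant $\beta\in(0,1]$ (to be fixed last) gives $X<(1+\beta)(1-\epsilon^{k})\delta\,\mathrm{Opt}^{*}$ with probability at least $1-\exp(-\beta^{2}\mu/3)$. Since the target improvement over $\delta$ is only of order $\epsilon^{k}$, $\beta$ is forced to be a small constant, so this failure probability is small only once $\mu$ --- equivalently $\mathrm{Opt}^{*}$ --- exceeds a large constant depending only on $\delta,k,\epsilon$; I would dispose of the complementary regime separately, since there $\mathrm{Opt}\le\delta\,\mathrm{Opt}^{*}$ is bounded and an optimal cover is found by brute force over edge sets of bounded size, and from here on assume $\mathrm{Opt}^{*}$ lies above that threshold so that $\exp(-\beta^{2}\mu/3)$ is as small as we need.

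For $n-Y$: Theorem~\ref{Che-Can} applied to $Y$ with deviation $a=t\sqrt{2}\,n\exp(-a_{k,\epsilon})$ for a constant $t$ gives $\Pr(Y\le\mathbb{E}(Y)-a)\le\tfrac{1}{1+t^{2}}$, hence $n-Y<(n-\mathbb{E}(Y))+a\le(1+t\sqrt{2})\,n\exp(-a_{k,\epsilon})$ (using $\exp(-2a_{k,\epsilon})\le\exp(-a_{k,\epsilon})$). The crucial step is to convert the $n$'s into $\mathrm{Opt}^{*}$'s: Lemma~\ref{lemma:random}(iv) together with $\alpha=\tfrac{(b-1)\delta\epsilon^{k}}{6\ell}\exp(a_{k,\epsilon})$ gives $n<\tfrac{\delta\epsilon^{k}}{6}\exp(a_{k,\epsilon})\,\mathrm{Opt}^{*}$, i.e.\ $n\exp(-a_{k,\epsilon})<\tfrac{\delta\epsilon^{k}}{6}\mathrm{Opt}^{*}$; substituting yields $n-Y<c_{0}\,\delta\epsilon^{k}\,\mathrm{Opt}^{*}$ with $c_{0}=\tfrac{1+t\sqrt{2}}{6}$. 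The cancellation of $\exp(a_{k,\epsilon})$ (from $\alpha$, hence from the bound on $n$) against $\exp(-a_{k,\epsilon})$ (from the concentration of $Y$) is precisely what the factor $\exp(a_{k,\epsilon})$ in the definition of $\alpha$ is built for.

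On the intersection of the two good events we then get $|C|\le X+(n-Y)<\big[(1+\beta)(1-\epsilon^{k})+c_{0}\epsilon^{k}\big]\delta\,\mathrm{Opt}^{*}$, so it remains to choose $t$ and $\beta$ so that simultaneously $(1+\beta)(1-\epsilon^{k})+c_{0}\epsilon^{k}\le1-\tfrac12(1-\epsilon)\epsilon^{k}$ --- equivalently $\beta(1-\epsilon^{k})\le\big(1-c_{0}-\tfrac12(1-\epsilon)\big)\epsilon^{k}$, whose right side is a positive constant for every admissible $\epsilon$ as soon as $c_{0}<\tfrac12+\tfrac{\epsilon}{2}$ (recall $\epsilon\ge\tfrac{\delta-1}{2\delta}\ge\tfrac14$) --- while the two failure probabilities $\exp(-\beta^{2}\mu/3)$ and $\tfrac{1}{1+t^{2}}$ sum to less than $0.35$. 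Together with Theorem~\ref{theoremcase1} on the other branch this proves the theorem. The main obstacle is exactly this balancing act: the gap between $\big(1-\tfrac12(1-\epsilon)\epsilon^{k}\big)\delta$ and $\mathbb{E}(|C|)/\mathrm{Opt}^{*}$ is only $\Theta(\epsilon^{k})$, so the constants $c_{0}$ (hence $t$) and $\beta$ must be pinned down tightly enough that both the Chernoff slack and the Chebychev deviation fit inside that gap while still leaving combined failure probability below $0.35$ --- and, relatedly, the Chernoff bound is informative only when $\mathrm{Opt}^{*}$ is large, a case distinction that has to be made explicitly.
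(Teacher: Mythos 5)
Your proposal is correct and follows essentially the same route as the paper's proof: the same branch split via Theorem~\ref{theoremcase1}, the bound $|C|\le X+n-Y$, Chernoff (Theorem~\ref{Doerr}) on $X$, Chebychev--Cantelli (Theorem~\ref{Che-Can}) on $Y$, the conversion of $n\exp(-a_{k,\epsilon})$ into $\Theta(\epsilon^{k}\delta)\,\mathrm{Opt}^{*}$ via Lemma~\ref{lemma:random}(iv) and the definition of $\alpha$, and a brute-force disposal of small instances. The only difference is presentational: the paper pins the constants explicitly (deviation $t=2n\exp(-a_{k,\epsilon})$, i.e.\ your $t=\sqrt{2}$ giving failure $\tfrac13$, Chernoff parameter $\beta=\tfrac12\epsilon^{k+1}$, and a threshold on $n$ rather than on $\mathrm{Opt}^{*}$), whereas you leave them symbolic and argue a feasible balancing exists.
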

\vskip 0.2cm\noindent
\begin{proof}
Let $\mathcal{C}$ be the event that the inequality
$|C|<\left(1 - \frac{1}{2}\left(1-\epsilon\right)\epsilon^k \right)\delta\cdot \mathrm{Opt}^{*}$ is satisfied.
It suffices to prove that event $\mathcal{C}$ holds with the given probability in the case
 $|C_1|< \alpha \cdot \mathrm{Opt}^{*}$ since the opposite case is 
  discussed in Theorem~\ref{theoremcase1}.
 For this purpose, we estimate both the concentration of $X$ and $Y$ around their expectation. 
 Choose $t=2n\exp\left(-a_{k,\epsilon}\right)$ and consider $\mathcal{A}$ the event
$Y \leq n(1- \exp\left(-2a_{k,\epsilon}\right))-t$.\\
This involves
\begin{eqnarray*}
 n\exp\left(-2a_{k,\epsilon}\right) +t &=& n\exp\left(-2a_{k,\epsilon}\right)+ 2n\exp\left(-a_{k,\epsilon}\right)\\
 &\leq & 3n\exp\left(-a_{k,\epsilon}\right) \\
&= &  \frac{n(b-1)}{\ell}\cdot \frac{6\ell}{(b-1)\delta \epsilon^k \exp\left(a_{k,\epsilon}\right) }\cdot \frac{1}{2}\epsilon^k \delta\\
&= &  \frac{n(b-1)}{\alpha \ell} \cdot \frac{1}{2}\epsilon^k \delta\\
&\overset{\textrm{Lem }~\ref{lemma:random} (iv)}{\leq} &  \frac{1}{2}\epsilon^k \delta \cdot \mathrm{Opt}^{*}.
\end{eqnarray*}
And by Lemma~\ref{lemma:random}(ii) we have $\frac{t^2}{{\rm Var}(Y)}\geq \frac{4n^2\exp\left(-2a_{k,\epsilon}\right)}{2n^2\exp\left(-2a_{k,\epsilon}\right)}=2$. \\
\\
Therefore
\begin{eqnarray*}
\Pr\left(\mathcal{A} \right)
& \leq & \Pr\left(Y \leq \mathbb{E}(Y)-t \right)\\
& \overset{\textrm{Th }~\ref{Che-Can}}{\leq} & \frac{{\rm Var}(Y)}{ {\rm Var}(Y)+t^2}\\
& = & \frac{1}{ 1+\frac{t^2}{{\rm Var}(Y)}}\\
 & \leq & \frac{1}{3}.
\end{eqnarray*}
Consider now $\mathcal{B}$ the event
$X \geq \left(1-\left(1 - \frac{1}{2}\epsilon \right)\epsilon^k\right)\delta \mathrm{Opt}^{*}$.
Our basic assumption is to consider $\delta$, $k$ and $\epsilon $ constants, and we can certainly assume that
 $n\geq  \frac{ 16 \exp\left(a_{k,\epsilon}\right)}{\epsilon^{k+2}  } $,
since otherwise we obtain an optimal solution for the \sbmultcov problem
in polynomial time.\\
Choosing $\beta=\frac{1}{2}\epsilon^{k+1}$ we have
 \begin{eqnarray*}
(1+\beta)(1-\epsilon^k)
& = & 1-\epsilon^k + \frac{1}{2}\epsilon^{k+1}-\frac{1}{2}\epsilon^{2k+1} \\
& = & 1-\epsilon^k \left(1 - \frac{1}{2}\epsilon +\frac{1}{2}\epsilon^{k+1}\right)\\
& \leq & 1-\left(1 - \frac{1}{2}\epsilon \right)\epsilon^k.\\
\end{eqnarray*}

Note that $\epsilon \in \left[\frac{\delta-1}{2\delta},\left(\frac{\delta-1}{2\delta}\right)^{\frac{1}{k}}\right]$
therewith 
$1-\epsilon^k \geq 1- \frac{\delta-1}{2\delta} = \frac{\delta+1}{2\delta}>\frac{1}{2}$.\\
 We thus get
\begin{eqnarray*}
\Pr\left( \mathcal{B} \right)
& \leq &
\Pr\left(X \geq (1 + \beta)\cdot(1-\epsilon^k)\delta \mathrm{Opt}^{*} \right)\\
&\overset{\textrm{Th }~\ref{Doerr}}{\leq}&
\exp\left(- \frac{\beta^2 (1-\epsilon^k)\delta \mathrm{Opt}^{*}}{3}\right)\\
&\overset{ \textrm{Lem.}\ref{lemma:random}(iv)}{\leq}&
\exp\left(- \frac{\epsilon^{2k+2} (1-\epsilon^k)\delta n(b-1)}{12\ell}\cdot
\frac{6\ell}{(b-1)\delta \epsilon^k \times \exp\left(a_{k,\epsilon}\right)}\right)\\
&\leq &
\exp\left(- \frac{\epsilon^{k+2}  (1-\epsilon^k) n}{ 2 \exp\left(a_{k,\epsilon}\right)}\right)\\
&\leq &
\exp\left(- \frac{\epsilon^{k+2}  n}{ 4 \exp\left(a_{k,\epsilon}\right)}\right)\\
&\leq & \exp\left(-4\right).
\end{eqnarray*}
Therefore it holds that
\begin{eqnarray}\label{Intersection}
\Pr\left( \overline{\mathcal{A}} \cap \overline{\mathcal{B}} \right) 
&\geq &  1-\left(\frac{1}{3}+\exp\left(-4\right)\right),
\end{eqnarray}
where $ \overline{\mathcal{A}} $ and $\overline{\mathcal{B}}$ denote the complement events of $\mathcal{A}$ and $\mathcal{B}$ respectively.
We conclude that
\begin{eqnarray*}
\Pr\left(\mathcal{C}\right) & = &
\Pr\left(|C|\leq \left(1 - \frac{1}{2}\left(1-\epsilon\right)\epsilon^k  \right) \delta\mathrm{Opt}^{*}\right)\\
& = & \Pr\left(|C|\leq \left(1-\left(1 - \frac{1}{2}\epsilon \right)\epsilon^k+\frac{1}{2}\epsilon^k \right) \delta\mathrm{Opt}^{*}\right)\\
&\overset{(\ref{expection})}{\geq} & \Pr\left(X+n-Y\leq\left(1-\left(1 - \frac{1}{2}\epsilon \right)\epsilon^k+\frac{1}{2}\epsilon^k \right) \delta\mathrm{Opt}^{*}\right)\\
&\geq & \Pr\left(X \leq \left( 1-\left(1 - \frac{1}{2}\epsilon \right)\epsilon^k \right)\delta  \mathrm{Opt}^{*} \text{\ and\ }
n-Y \leq \frac{1}{2}\epsilon^k \delta \mathrm{Opt}^{*} \right)\\
&\geq & \Pr\left(X \leq \left( 1-\left(1 - \frac{1}{2}\epsilon \right)\epsilon^k \right)\delta \mathrm{Opt}^{*} \text{\ and\ }
Y \geq n-\frac{1}{2}\epsilon^k \delta \mathrm{Opt}^{*} \right)\\
&\geq & \Pr\left(X \leq \left( 1-\left(1 - \frac{1}{2}\epsilon \right)\epsilon^k \right)\delta \mathrm{Opt}^{*} \text{\ and\ }
Y \geq n-n\exp\left(-2a_{k,\epsilon}\right)-t \right)\\
&\geq & \Pr\left(X  \leq \left( 1-\left(1 - \frac{1}{2}\epsilon \right)\epsilon^k \right)\delta\mathrm{Opt}^{*} \text{\ and\ }
Y \geq n(1-\exp\left(-2a_{k,\epsilon}\right))-t  \right)\\
& \overset{(\ref{Intersection} )}{\geq} & 1-\left(\frac{1}{3}+\exp\left(-4\right)\right)\\
&\geq & 0.65.
\end{eqnarray*}
\end{proof}
\hfill$\Box$\\
 {\bf Remark 2.} The proof above gives for $k=2$ and $\epsilon=\frac{1}{2}$ an approximation ratio of 
$ \max\left\{ \frac{15}{16}\delta, \left(1- \frac{(b-1)\exp\left(\frac{ 3\delta+1}{8}\right)}{72 \ell}  \right)\delta\right\} $. Note that $ \frac{\delta-1}{2\delta}<\frac{1}{2}<  \left( \frac{\delta-1}{2\delta}\right)^{\frac{1}{2}}$ therewith the condition of Theorem~\ref{Maintheorem} on $\epsilon$ is satisfied. 
 
 As mentioned above our performance guaranty improves over the ratio presented by Srivastav et al~\cite{EMS16}, and this without restriction on the parameter $\ell$.
 Namely, for $\delta \geq 13$ we have
 \begin{eqnarray*}
11(\delta-1) < \exp\left(\frac{ 3\delta+1}{8}\right)
 & \Rightarrow &
 \frac{11(\delta-1)}{72\ell}< \frac{\exp\left(\frac{ 3\delta+1}{8}\right)}{72\ell}\\
  &\overset{b-1\geq 1}{ \Rightarrow }&
 \frac{11(\Delta-b)}{72\ell}< \frac{(b-1)\exp\left(\frac{ 3\delta+1}{8}\right)}{72\ell}\\
  & \Rightarrow &
 \left(1-\frac{(b-1)\exp\left(\frac{ 3\delta+1}{8}\right)}{72\ell}\right) \delta
<
\left(1-\frac{11(\Delta-b)}{72\ell}\right) \delta. 
 \end{eqnarray*}

\section{Lower Bound}
 One of the features of the proof is the duality of hypergraphs. In dual hypergraphs, vertices and edges just swap the roles. So the  \sbmultcov problem in dual hypergraphs becomes as follows: find a minimum cardinality set $C\subseteq V $ such that for every $E\in \mathcal{E}$ it holds $|E\cap C|\geq b$. This problem is known as the $b$-vertex cover problem and we have that the  \sbmultcov problem in $\Delta$-regular hypergraphs is equivalent to the $b$-vertex cover problem in $\Delta$-uniform hypergraphs.
\begin{theorem}\label{SC} 
Let $\epsilon >0$, $\Delta$ and $\bf b\in\mathbb{N}_0^n$ be given and $b=\min_i b_i$.
Then, it is NP-hard to approximate the \sbmultcov problem on $\Delta$-regular hypergraphs within a factor of $\Delta-b-\epsilon$.\@
\end{theorem}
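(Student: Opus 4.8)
The plan is to obtain Theorem~\ref{SC} by composing the $\ell$-uniform hypergraph vertex cover hardness of Theorem~\ref{Dinur} with a simple gap-preserving reduction, using the hypergraph duality recalled above. The first step is purely a change of language: since the dual of an $\ell$-uniform hypergraph is $\ell$-regular and a vertex cover of a hypergraph is precisely a set cover of its dual (the $b=1$ instance of the equivalence stated above), Theorem~\ref{Dinur} immediately gives, for every $\ell\geq 3$ and every $\epsilon>0$, that it is $\classNP$-hard to approximate minimum set cover on $\ell$-regular hypergraphs within a factor of $\ell-1-\epsilon$. I would apply this with $\ell=\delta:=\Delta-b+1$; note that Theorem~\ref{SC} is vacuous unless $\Delta-b\geq 2$, i.e.\ $\delta\geq 3$, which is exactly the regime covered by Theorem~\ref{Dinur}.

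The heart of the argument is a reduction from minimum set cover on a $\delta$-regular hypergraph $\mathcal{H}=(V,\mathcal{E})$ to \sbmultcov (with $b_i=b$ for all $i$) on a $\Delta$-regular hypergraph $\mathcal{H}'$, where $\Delta=\delta+b-1$. I would form $\mathcal{H}'$ by adding to $\mathcal{H}$ exactly $b-1$ ``universal'' edges $e_1,\dots,e_{b-1}$, each consisting of all of $V$; every vertex then picks up exactly $b-1$ extra in its degree, so $\mathcal{H}'$ is $\Delta$-regular. (To make $\mathcal{H}'$ a genuinely simple hypergraph with exact degrees one attaches a constant-size padding gadget on $O(b)$ fresh dummy vertices distinguishing the copies; this perturbs only the $O(1)$ additive constants below, and I would treat it as routine bookkeeping.) The claim to prove is the exact identity $\mathrm{Opt}_{\mathrm{SMC}}(\mathcal{H}')=\mathrm{Opt}_{\mathrm{SC}}(\mathcal{H})+(b-1)$. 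The inequality ``$\leq$'' is immediate: a minimum set cover of $\mathcal{H}$ together with $e_1,\dots,e_{b-1}$ is a feasible $b$-multicover of $\mathcal{H}'$. For ``$\geq$'', take any $b$-multicover $C'$ of $\mathcal{H}'$ and put $d:=|C'\cap\{e_1,\dots,e_{b-1}\}|\in\{0,\dots,b-1\}$. Since each $e_j$ contains all of $V$, the family $C'\cap\mathcal{E}$ is a $(b-d)$-fold cover of $\mathcal{H}$; and a one-line exchange argument --- deleting any $q-1$ edges incident to a fixed vertex turns a $q$-fold cover ($q\geq 1$) into an ordinary set cover --- gives $|C'\cap\mathcal{E}|\geq\mathrm{Opt}_{\mathrm{SC}}(\mathcal{H})+(b-d-1)$, whence $|C'|\geq\mathrm{Opt}_{\mathrm{SC}}(\mathcal{H})+(b-1)$.

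Finally I would transfer the gap. Feeding the set cover gap instances from the first step through this reduction, distinguishing $\mathrm{Opt}_{\mathrm{SC}}(\mathcal{H})\leq c$ from $\mathrm{Opt}_{\mathrm{SC}}(\mathcal{H})\geq(\delta-1-\epsilon)c$ becomes distinguishing $\mathrm{Opt}_{\mathrm{SMC}}(\mathcal{H}')\leq c+(b-1)$ from $\mathrm{Opt}_{\mathrm{SMC}}(\mathcal{H}')\geq(\delta-1-\epsilon)c+(b-1)$. Because $b$ is a fixed constant while $c\to\infty$ with the instance size --- and, if one prefers not to rely on this, because one may first take $t$ disjoint copies of $\mathcal{H}$ covered by a single family of $b-1$ universal edges, which scales $c$ by $t$ but leaves the additive term equal to $b-1$ --- the ratio $\frac{(\delta-1-\epsilon)c+(b-1)}{c+(b-1)}$ exceeds $\delta-1-\epsilon'$ for any prescribed $\epsilon'>\epsilon$ once the instance is large enough. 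Hence it is $\classNP$-hard to approximate \sbmultcov on $\Delta$-regular hypergraphs within $\delta-1-\epsilon'=\Delta-b-\epsilon'$, and as $\epsilon,\epsilon'>0$ are arbitrary this is Theorem~\ref{SC}. I expect the only genuine difficulty to be bookkeeping: carrying out the ``universal edge'' idea while simultaneously keeping $\mathcal{H}'$ simple, exactly $\Delta$-regular, and with only an $O(1)$ rather than $\Omega(n)$ additive change in the optimum --- the last requirement being precisely what forces the added edges to be (essentially) all of $V$ instead of a sparse structure, and what makes the gap survive.
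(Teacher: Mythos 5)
Your proposal is correct, and at its core it is the same reduction as the paper's, just written on the dual side: the paper adjoins $b-1$ new vertices to every edge of a hard $\Delta$-uniform vertex cover instance, turning it into a $(\Delta+b-1)$-uniform $b$-vertex cover instance, which under hypergraph duality is exactly your addition of $b-1$ universal edges to a $\delta$-regular set cover instance with $\delta=\Delta-b+1$. Where you genuinely diverge is in how the additive loss of $b-1$ is absorbed. The paper does it algorithmically inside a contradiction argument: a preprocessing step tests all vertex subsets of size at most $\alpha=\frac{2}{\epsilon}(\Delta-1-\epsilon)(b-1)$ (a constant), so that in the remaining case the optimum exceeds $\alpha$ and the additive term $(\Delta-1-\epsilon)(b-1)$ becomes a multiplicative loss of $\frac{\epsilon}{2}$, yielding a $(\Delta-1-\frac{\epsilon}{2})$-approximation for vertex cover on $\Delta$-uniform hypergraphs and contradicting Theorem~\ref{Dinur}. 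You instead prove the exact identity $\mathrm{Opt}_{\mathrm{SMC}}(\mathcal{H}')=\mathrm{Opt}_{\mathrm{SC}}(\mathcal{H})+(b-1)$ --- your exchange argument that any $q$-fold cover loses at most $q-1$ edges when reduced to an ordinary cover is sound --- and then make the additive term negligible either by letting the optimum grow or, more robustly, by taking $t$ disjoint copies under a single family of universal edges; these devices play precisely the same role as the paper's brute-force step and are equally valid. Two minor points: working on the regular side makes your $b-1$ universal edges identical as sets, so you do need multi-edges or your padding fix (the paper's uniform-side construction hides the same degeneracy in the duality, since the added vertices have identical neighborhoods, so you are if anything more explicit); and your parameter bookkeeping, starting from hardness at $\delta\geq 3$ and landing on $\Delta$-regular instances, is cleaner than the paper's, which applies its assumed algorithm for $\Delta$-uniform instances to a $(\Delta+b-1)$-uniform hypergraph.
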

\emph{Proof}.
 Assume, for a contradiction, that the theorem is false.  
Then there exists an algorithm $\mathcal{A}$ that returns a $(\Delta-b-\epsilon)$-approximation in polynomial time for the $b$-vertex cover problem on $\Delta$-uniform hypergraphs.
 
 We give a reduction of  the minimum vertex cover problem on $\Delta$-uniform hypergraphs to the $b$-vertex cover problem on $\Delta+b-1$-uniform hypergraphs.
 
 Let $\tilde{\mathcal{H}}=(V,\mathcal{E})$ be a $\Delta$-uniform hypergraph 
 and let $\alpha=\frac{2}{\epsilon}(\Delta-1-\epsilon)(b-1)$.\\
  Now we consider the following algorithm:
 \begin{itemize}
 \item[1.] Consider all subsets $T\subseteq V$ with $|T|\leq\alpha$. Check if any of these subsets is a vertex cover in $\tilde{\mathcal{H}}$. If it's the case then return the smallest one of them, else go to step 2.
 \item[2.] Add $b-1$ vertices $v_{1},\ldots,v_{b-1}$ to $V$. Define for every hyper-edge $E$ a new edge $E^{\ast}:=E\cup \{v_{1},\ldots,v_{b-1}\}$ and the set $\mathcal{E}^{\ast}:=\{E^{\ast}|E\in \mathcal{E}\}$. Finally set $\mathcal{H}=(V\cup\{v_{1},\ldots,v_{b-1}\},\mathcal{E}^{\ast})$. We execute $\mathcal{A}$ on $\mathcal{H}$. Return $T:=\mathcal{A}(\mathcal{H})\cap V$.
 \end{itemize}
\paragraph{\bf Claim} The algorithm given above returns a vertex cover in $\tilde{\mathcal{H}}$ in polynomial-time with an approximation ratio of $\Delta-1-\frac{\epsilon}{2}$.

\emph{Proof}.\\ 
{\bf Correctness and approximation ratio.} 
If $T$ is selected by the algorithm in step $1$ then $T$ is an optimal vertex cover in $\tilde{\mathcal{H}}$.\\
If $T$ is selected by the algorithm in step $2$ then 
$\mathcal{A}(\mathcal{H})= T \cup K$ 
for some $K\subset \{v_{1},\ldots,v_{b-1}\}$. Note that $T$ and $K$ are disjoint sets. Consider an edge $E\in\mathcal{E}$.
Because $\mathcal{A}(\mathcal{H})$ is a $b$-vertex cover in $\mathcal{H}$, we have $|\mathcal{A}(\mathcal{H})\cap E^{\ast}|= |T\cap E^{\ast}|+ |K\cap E^{\ast}|\geq b$. Since $T\cap E^{\ast}=T\cap E$ and $|K\cap E^{\ast}|\leq b-1$, it follows that $|T\cap E|\geq 1$. Hence $T$ is a vertex cover in $\tilde{\mathcal{H}}$.\\
Now, let $C$ and
$C'$ denote a minimum vertex cover in $\tilde{\mathcal{H}}$ and a minimum $b$-vertex cover in $\mathcal{H}$, respectively.
 Since $D^{'}:= C\cup \{v_{1},\ldots,v_{b-1}\}$ is a feasible $b$-vertex cover in $\mathcal{H}$, it holds that $|C'|\leq |C|+b-1$.
On the other hand, it is clear that $\mathcal{H}$ is a $\left(\Delta+b-1\right)$-uniform hypergraph, and by the assumption we get
 \begin{align*}
  |\mathcal{A}(\mathcal{H})|
 &\leq \left((\Delta+b-1)-b-\epsilon\right)|C'|\\
  &\leq (\Delta-1-\epsilon)|C|+(\Delta-1-\epsilon)(b-1)=(\Delta-1-\epsilon)|C|+\frac{\epsilon}{2}\alpha\\
  & \overset{|C|\geq \alpha} {\leq} (\Delta-1-\frac{\epsilon}{2})|C|.
\end{align*} 
Since $|T|= |\mathcal{A}(\mathcal{H})\cap V|\leq  |\mathcal{A}(\mathcal{H})|$, it follows that
$|T|\leq (\Delta-1-\frac{\epsilon}{2})|C|$.

{\bf Running time.} In step 1 we test at most $n^{\alpha}$ sets of vertices to be a vertex cover in $\tilde{\mathcal{H}}$. Since $\alpha=\frac{2}{\epsilon}(\Delta-1-\epsilon)(b-1)$ is a constant, the running time in this step is polynomial. In step 2 we add a constant number of vertices to $V$ and execute the algorithm $\mathcal{A}$. Hence the algorithm runs in polynomial time in both steps.

With Claim 1 there is a factor $\Delta-1-\frac{\epsilon}{2}$ approximation algorithm for the minimum vertex cover problem on $\Delta$-uniform hypergraphs, which contradicts the statement of Theorem~\ref{Dinur}.
\hfill$\Box$
\section{The $\frac{\ln_{2}(n+1)}{2b}$-Integrality Gap}
The integrality gap for \sbmultcov problem is defined as the supremum of the ratio $\frac{{\rm Opt}_{{\bf b}}(\mathcal{H})}{{\rm Opt}^{*}_{{\bf b}}(\mathcal{H}) }$ over all instances $\cal{H}$ of the problem.
In this section we give a slight modification of the proof presented in~\cite{Vazirani01} for the integrality gap. We present in the following a specific class of instances of the \sbmultcov problem, where ${\bf b}:=(b,\ldots,b)\in \mathbb{N}^{n}$ for which the integrality gap is at least $\frac{\ln_{2}(n+1)}{2b}$.
\begin{theorem}\label{IG}
let ${\bf b}:=(b,\ldots,b)\in \mathbb{N}^{n}$.
The integrality gap of the \sbmultcov problem is at least $\frac{\ln_{2}(n+1)}{2b}$.
\end{theorem}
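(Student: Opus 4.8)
The plan is to imitate the classical set‑cover integrality‑gap instance of~\cite{Vazirani01}, rescaling the optimal fractional point by the factor $b$ so that the LP value stays below $2b$ while the integral optimum is the same as in the $b=1$ case. Concretely, I would fix an integer $k$ with $2^{k-1}\ge b$, set $n:=2^{k}-1$, take the vertex set $V:=\{0,1\}^{k}\setminus\{\mathbf 0\}$, and for each nonzero $a\in\{0,1\}^{k}$ define the hyperedge $S_a:=\{v\in V:\langle a,v\rangle\equiv 1\pmod 2\}$; then $\mathcal E:=\{S_a : a\neq\mathbf 0\}$ and $b_v:=b$ for all $v$ (so $\mathbf b=(b,\dots,b)$ as in the statement). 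This is the instance $\mathcal H=(V,\mathcal E)$ on which I claim $\mathrm{Opt}/\mathrm{Opt}^{*}>\log_2(n+1)/(2b)$.

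First I would record the elementary counting fact that, for any fixed nonzero $v$, the map $a\mapsto\langle a,v\rangle$ is a nontrivial $\mathrm{GF}(2)$‑linear functional, so exactly $2^{k-1}$ of the $2^{k}$ vectors $a$ satisfy $\langle a,v\rangle\equiv 1$, and $\mathbf 0$ is not among them; hence $\abs{\Gamma(v)}=2^{k-1}$ for every vertex $v$. From this it is immediate that $x^{*}_a:=b/2^{k-1}$ (for every $a$) lies in $[0,1]$ because $2^{k-1}\ge b$, and is feasible for the LP relaxation, since each vertex constraint evaluates to $\sum_{a:\,v\in S_a}x^{*}_a=2^{k-1}\cdot b/2^{k-1}=b$. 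Its objective value is $(2^{k}-1)\,b/2^{k-1}=2b-b/2^{k-1}<2b$, so $\mathrm{Opt}^{*}<2b$. (The integer program is also feasible: $C=\mathcal E$ covers every $v$ exactly $2^{k-1}\ge b$ times.)

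The substantive step is the lower bound $\mathrm{Opt}\ge k$, for which I would reuse the linear‑algebra argument behind the $b=1$ example. Let $C=\{S_a : a\in A\}$ be any feasible set multicover, where $A\subseteq\{0,1\}^{k}\setminus\{\mathbf 0\}$. For every nonzero $v$ the multicover condition forces $\abs{\{a\in A:\langle a,v\rangle\equiv 1\}}\ge b\ge 1$; in particular no nonzero $v$ is orthogonal (mod $2$) to all of $A$, which is exactly the statement that $A$ spans $\{0,1\}^{k}$ over $\mathrm{GF}(2)$. Hence $\abs{C}=\abs{A}\ge k$, so $\mathrm{Opt}\ge k$. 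Combining the two estimates, $\mathrm{Opt}/\mathrm{Opt}^{*}>k/(2b)=\log_2(n+1)/(2b)$, which is the asserted bound on the integrality gap; since $k$ may be taken arbitrarily large, this also gives the $\Omega(\ln n)$ conclusion for constant $b$.

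I do not expect a real obstacle here: the whole argument is the two observations above together with the mod‑$2$ spanning argument that already drives the classical case $b=1$. The only points needing a word of care are (a) restricting to $n=2^{k}-1$ with $2^{k-1}\ge b$, so that the instance is feasible and the requirement $b\ge 2$ is respected — harmless, since the integrality gap is a supremum over instances — and (b) the verification that rescaling the fractional solution by $b$ keeps it in $[0,1]^{m}$, which holds precisely because $2^{k-1}\ge b$.
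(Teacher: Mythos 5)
Your proposal is correct and follows essentially the same route as the paper: the same Vazirani-style instance $V=F_2^k\setminus\{0\}$ with edges $E_a=\{v:\langle a,v\rangle\equiv 1 \bmod 2\}$, a uniform fractional solution of value below $2b$, and the $\mathrm{GF}(2)$ spanning (equivalently, intersection-of-complements) argument giving $\mathrm{Opt}\geq k=\log_2(n+1)$. Your write-up is in fact slightly more careful than the paper's, since you make explicit the degree $2^{k-1}$ and the condition $2^{k-1}\geq b$ needed for the scaled fractional point to lie in $[0,1]^m$.
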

Define $ V=F_2^k\backslash\{0\}$ as the set
of all $k$-dimensional non-zero vectors with component values of $\mathbb{Z}_2 =\{ 0,1\}$ for a fixed integer $k$  and we define ${\cal E}$ as a collection of the sets $E_v=\{u\in  V: <v,u>\equiv 1[2]\}$
 for each $v\in V$, where $ <.\,,.> $ is the usual dot product in $ V$.

We remark that each element $v\in  V$ is contained in exactly half of the sets of ${\cal E}$ therewith the hypergraph  ${\cal H}=(V,{\cal E})$ is regular and  $n=| V|=2^k-1$.

\begin{lemma}\label{fraction}
Let ${\cal H}=(V,{\cal E})$ the hypergraph defined and  ${\bf b}\in \mathbb{N}_{\geq 1}^{n}$. 
It holds that
 the vector $x=(\frac{2b}{|{\cal E}|},\ldots,\frac{2b}{|{\cal E}|})$ is a feasible solution for  LP($\Delta,\,{\bf b}$).

\end{lemma}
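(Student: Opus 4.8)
The plan is to verify directly that the all-equal vector $x=\bigl(\tfrac{2b}{|\mathcal{E}|},\ldots,\tfrac{2b}{|\mathcal{E}|}\bigr)$ satisfies both families of constraints of LP($\Delta,\,{\bf b}$). The engine of the argument is a degree count: fix a vertex $u\in V$ and consider the map $v\mapsto\langle v,u\rangle$ on $F_2^k$. Since $u\neq 0$ this is a non-zero linear functional over $F_2$, hence surjective onto $F_2$ with both fibres of size $2^{k-1}$; in particular exactly $2^{k-1}$ vectors $v\in F_2^k$ satisfy $\langle v,u\rangle\equiv 1\,[2]$, and none of these is the zero vector. Thus $u$ lies in exactly $2^{k-1}$ of the edges $E_v$, i.e.\ $d(u)=2^{k-1}$ for every $u\in V$, confirming the regularity noted before the lemma. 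Along the way I would record that $v\mapsto E_v$ is injective: if $E_v=E_{v'}$ then $\langle v-v',u\rangle\equiv 0\,[2]$ for all $u\in V$, which (again testing against a functional over $F_2$) forces $v=v'$; hence $|\mathcal{E}|=|V|=n=2^k-1$.

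With these two facts in hand the covering constraints are immediate. Writing $b:=\max_i b_i$ (all $b_i$ equal $b$ in the instance of Theorem~\ref{IG}), we get for every vertex $u$
\begin{equation*}
\sum_{E_v\in\Gamma(u)} x_v \;=\; d(u)\cdot\frac{2b}{|\mathcal{E}|}\;=\;2^{k-1}\cdot\frac{2b}{2^k-1}\;=\;\frac{2^k}{2^k-1}\,b\;\ge\; b\;\ge\; b_i ,
\end{equation*}
so each inequality $\sum_j a_{ij}x_j\ge b_i$ holds (in fact with a little slack). It then remains to check the box constraints $0\le x_j\le 1$: positivity is clear, and $\tfrac{2b}{2^k-1}\le 1$ as soon as $2^k-1\ge 2b$, i.e.\ for all sufficiently large $k$. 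Since the integrality-gap bound of Theorem~\ref{IG} equals $\tfrac{\ln_2(n+1)}{2b}=\tfrac{k}{2b}$ and is only of interest in the regime $k\to\infty$, restricting to such $k$ is without loss of generality.

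I do not expect a genuine obstacle here: the whole argument is elementary linear algebra over $F_2$. The only points that warrant a sentence of care are the injectivity of $v\mapsto E_v$ (so that $|\mathcal{E}|$ may be identified with $n=2^k-1$) and the exact degree count $d(u)=2^{k-1}$, both of which reduce to the fact that a non-zero linear functional on $F_2^k$ takes each value on exactly $2^{k-1}$ points. Everything else — substituting these values into the constraints, as displayed above — is the routine computation I would not spell out in detail.
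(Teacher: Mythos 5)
Your proof is correct and takes essentially the same route as the paper: the feasibility of the all-$\frac{2b}{|\mathcal{E}|}$ vector is checked at each vertex via the regular degree of the hypergraph. You are in fact more careful than the paper's own proof, which asserts $\Delta=\frac{|\mathcal{E}|}{2}$ (the exact degree is $2^{k-1}$, slightly more than half of $|\mathcal{E}|=2^k-1$, so the covering inequality still holds) and silently omits both the injectivity of $v\mapsto E_v$ and the box constraint $x_j\le 1$, which, as you note, requires $2^k-1\ge 2b$, i.e.\ restricting to sufficiently large $k$.
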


\emph{Proof}. 
It is clear that $x=(\frac{2b}{|{\cal E}|},\ldots,\frac{2b}{|{\cal E}|})$ is a feasible solution for lP$(\Delta,{\bf b})$, namely since $\mathcal{H}$ is regular  with $\Delta=\frac{|{\cal E}|}{2}$ we have for every $i\in \{1,\ldots,n\}$
\begin{equation*}
\sum_{E\in \Gamma(v_i)}\frac{2b}{|{\cal E}|}=\frac{2b}{|{\cal E}|}\cdot \Delta= \frac{2b}{|{\cal E}|}\cdot \frac{|{\cal E}|}{2}\geq b
\end{equation*}
therewith $\mathrm{Opt}^{*}\leq 2b$.\hfill$\Box$\\

\begin{lemma}\label{optimal} The optimal integral solution to the previous LP formulation of the \sbmultcov problem requires
at least $k$ sets.
\end{lemma}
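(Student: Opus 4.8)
\emph{Proof proposal.} The plan is to ignore the multiplicities entirely and reduce to the ordinary set cover bound, exploiting the $\mathbb{F}_2$-linear structure of the instance: a $b$-multicover is in particular a cover, and $b\ge 1$, so it suffices to show that no collection of fewer than $k$ of the sets $E_v$ covers every vertex even once. I would argue by contradiction. Suppose $C=\{E_{a_1},\dots,E_{a_t}\}\subseteq\mathcal{E}$ is a feasible integral solution, where $a_1,\dots,a_t\in V=\mathbb{F}_2^{k}\setminus\{0\}$ index the chosen sets, and suppose $t<k$. The goal is to exhibit a vertex that is covered zero times.

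Let $W:=\operatorname{span}_{\mathbb{F}_2}\{a_1,\dots,a_t\}\subseteq\mathbb{F}_2^{k}$, so $\dim W\le t<k$. Since the standard dot product is a non-degenerate symmetric bilinear form on $\mathbb{F}_2^{k}$, its orthogonal complement satisfies the dimension identity $\dim W^{\perp}=k-\dim W\ge 1$, hence $W^{\perp}$ contains a nonzero vector $x$, and so $x\in V$. By bilinearity, $x\in W^{\perp}$ means $\langle a_i,x\rangle\equiv 0\pmod 2$ for every $i\in[t]$, so $x\notin E_{a_i}$ for every $i$ by the definition $E_{a_i}=\{u\in V:\langle a_i,u\rangle\equiv 1\pmod 2\}$. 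Therefore $x$ lies in no edge of $C$, i.e.\ $\abs{C\cap\Gamma(x)}=0<b$, contradicting that $C$ is a $b$-multicover. Hence every feasible integral solution uses at least $k$ sets, which is the assertion of Lemma~\ref{optimal}.

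I would then complete the section by assembling the pieces: Lemma~\ref{fraction} gives $\mathrm{Opt}^{*}\le 2b$, Lemma~\ref{optimal} gives $\mathrm{Opt}\ge k$, and $n=\abs{V}=2^{k}-1$ gives $k=\ln_{2}(n+1)$; therefore the integrality gap of this family is at least $\mathrm{Opt}/\mathrm{Opt}^{*}\ge k/(2b)=\ln_{2}(n+1)/(2b)$, which is $\Omega(\ln n)$ for constant $b$, proving Theorem~\ref{IG}.

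I do not expect a real obstacle here. The only point that needs to be stated carefully is the linear-algebra fact that a family of vectors over $\mathbb{F}_2$ hits every nonzero vector under the dot product precisely when it spans $\mathbb{F}_2^{k}$ --- equivalently $W^{\perp}=\{0\}\iff W=\mathbb{F}_2^{k}$ --- which relies on non-degeneracy of the form together with $\dim W+\dim W^{\perp}=k$ (valid over $\mathbb{F}_2$ even though $W\cap W^{\perp}$ may be nontrivial). Everything else is routine, and it is worth noting that the bound $k$ does not depend on $b$: a single uncovered vertex already violates feasibility, so no sharper counting is needed for the theorem.
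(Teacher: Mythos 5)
Your proposal is correct and is essentially the paper's own argument: the paper also observes that a feasible solution indexed by $v_1,\dots,v_t$ forces $0$ to be the unique solution of $\langle x,v_i\rangle\equiv 0\pmod 2$ for all $i\in[t]$, and concludes $t\ge k$ by the same $\mathbb{F}_2$ dimension count that you phrase via $\dim W+\dim W^{\perp}=k$. Your write-up just makes the rank argument and the reduction of multicover to ordinary cover (via $b\ge 1$) explicit, which the paper leaves implicit.
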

\emph{Proof}.
Let  
$\{ E_{v_1}, E_{v_2}\ldots E_{v_t} \}$  a collection of sets such that ${\bigcup}_{i\in [t]}E_{v_i}= F_2^k\backslash\{0\}$. 
This implies that the intersection of their complements contains exactly the zero vector,
i.e., ${\bigcap}_{i\in [t]} E_{v_i}^{C}=\{0\}$. 
It follows that $0$ is the only solution in $F_2^k$ of the system
\[
<x,v_i>\equiv 0[2],\quad\forall i\in [t]
\]
Then it holds that $t\geq k$, since the dimension of $F_2^k$ is $k$ while the number of the equations in the system is $t$. From this we conclude $\mathrm{Opt}\geq \ln_{2}(n+1)$.
\hfill$\Box$\\
\smallskip

{\bf Proof of Theorem~\ref{IG}.} Theorem~\ref{IG} follows from Lemma~\ref{fraction} and Lemma~\ref{optimal}. \hfill$\Box$\\
\smallskip
\section{Future Work}
We believe now that the conjecture of Peleg et al.\ holds in the general setting. Hence proving the trueness of the conjecture remains a big challenge for our future works.


\begin{thebibliography}{aaa}
\bibitem{Ba01}
R. Bar-Yehuda. \emph{Using Homogeneous Weights for Approximating the Partial Cover Problem.}
Journal of Algorithms, 39(2):137--144, 2001.


\bibitem{DinKhotRegev}
I. Dinur, V. Guruswami, S. Khot, O. Regev,
\emph{ A new multilayered PCP and the hardness of hypergraph vertex cover}, SIAM J. Comput. 34 (5)
 1129--1146, 2005.

\bibitem{Doerr}
 B. Doerr, F. (Eds.)  Neumann.
 \emph{ Theory of Evolutionary Computation: Recent Developments in Discrete Optimization}. Springer Nature 2019.
 
 \bibitem{Duh97}
R. Duh, M. Fürer.
\emph{ Approximating k-set cover by semi-local optimization.} in: Proc. 29th Annual Symposium
on Theory on Computing, May, pp. 256--264, 1997.

\bibitem{EFS14}
M. El Ouali, H. Fohlin, A. Srivastav.
\emph{An approximation algorithm for the partial vertex cover problem in hypergraphs}.
Journal of Combinatorial Optimization, 31(2): 846--864, 2016.

\bibitem{EFS14a}
M. El Ouali, H. Fohlin, A. Srivastav.
\emph{A Randomised approximation algorithm for the hitting set problem}.
Theoretical Computer Science 555: 23--34, 2014.

\bibitem{EMS16}
M. El Ouali, P. Munstermann, A. Srivastav. \emph{Randomized approximation for set multicover in hypergraphs}. Algorithmica, 74(2): 574--588, 2016.

 \bibitem{Feige}
 U. Feige. 
\emph{ A threshold of $\ln n$ for approximating
 set cover.} J. ACM, 45(4):634--652, 1998.

 \bibitem{Fujito}
 T. Fujito, H. Kurahashi.
\emph{ A Better-Than-Greedy Algorithm for k-Set Multicover.}
 In: 3rd International Workshop on Approximation and Online Algorithms, pp. 176--189, 2006. 

\bibitem{GaKhSr01}
R. Gandhi, S. Khuller and A. Srinivasan. \emph{Approximation Algorithms 
for Partial Covering Problems.} Journal of Algorithms, 53(1):55--84, 2004.

\bibitem{HH86}
N.G. Hall, D.S. Hochbaum.
\emph{A Fast Approximation Algorithm for the Multicovering Problem.}
Discrete Applied Mathematics, 15:35--40, 1986.

\bibitem{Hoch82}
D.S. Hochbaum.
\emph{Approximation Algorithms for the Set Covering and Vertex Cover Problems.}
SIAM J. Comput, 11(3):555--556, August 1982.

 \bibitem{Johnson74}
D. S. Johnson. \emph{Approximation Algorithms for Combinatorial Problems.}
Journal of Computer and System Sciences, 9:256--278, 1974.

\bibitem{Karp}
R. KARP, \emph{Reducibility among combinatorial problems.} In R.E. Miller and
J.W. Thatcher, editors, Complexity of Computer Computations, pp. 85--103.
Plenum Press, New York, NY, 1972.

\bibitem{KR08}
S. Khot and O. Regev. \emph{Vertex Cover Might be Hard to Approximate to Within 2-epsilon.}
Journal of Computer and System Sciences, 74(3):335--349, 2008.

\bibitem{Koufogiannakis}
C. Koufogiannakis, N.E. Young. 
  \emph{Greedy $\Delta$-approximation algorithm for covering with arbitrary constraints and submodular cost}. Algorithmica, 66(1), 113--152, 2013.

\bibitem{Lovasz94}
L. Lov\'asz. \emph{On the Ratio of Optimal Integral and Fractional Covers.}
Discrete Mathematics, 13(4):383--390, 1975.


\bibitem{MR}
R. Motwani, P. Raghavan. \emph{Randomized Algorithms}. Cambridge University Press 1995.

\bibitem{PSW97}
D. Peleg, G. Schechtman, A. Wool.
\emph{Randomized Approximation of Bounded Multicovering Problems.}
Algorithmica, 18(1):44--66, 1997.

\bibitem{PSW93}
D. Peleg, G. Schechtman, A. Wool.  
\emph{Approximating bounded 0-1 integer linear programs.} In
Proc.\ 2nd Israel Symp.\ on Theory of Computing Systems, pp.\ 69--77, Netanya, 1993. 

\bibitem{RajVaz}
S. Rajagopalan,  V. V. Vazirani. 
\emph{Primal-dual RNC approximation algorithms for set cover and covering integer programs}. SIAM J. Comput., 28(2), 525--540, 1998.

\bibitem{Vazirani01}
 V. V. Vazirani.
 \emph{Approximation Algorithms}, pp.\ 108--112, Springer 2001.
\end{thebibliography}
\end{document}